\numberwithin{equation}{section}
\newtheorem{theorem}{Theorem}[section]
\newtheorem{lemma}[theorem]{Lemma}
\newtheorem{corollary}[theorem]{Corollary}
\theoremstyle{definition}
\newtheorem{definition}[theorem]{Definition}
\theoremstyle{remark}
\newtheorem{remark}[theorem]{Remark}
\newcommand{\Div}{\operatorname{div}}
\newcommand{\Grad}{\nabla}
\newcommand{\bi}{\underline{i}}
\newcommand{\bj}{\mathbbm{j}}
\newcommand{\oxe}{\overline{u}_{1}}
\newcommand{\oxz}{\overline{u}_{2}}
\newcommand{\weakstar}{\overset{\star}\rightharpoonup}
\newcommand{\Dt}{\Delta t}
\newcommand{\Om}{\ensuremath{\Omega}}
\newcommand{\vt}{\vartheta}
\newcommand{\eps}{\epsilon}
\newcommand{\equationbox}[2]{\begin{center}

\fbox{
\begin{minipage}{0.96\textwidth}
#1
\end{minipage}
}
\end{center}
}
\begin{document}

\title[A new angular momentum method for wave maps]{A new angular momentum method for\\ computing  wave maps into spheres}

\author[Karper]{Trygve K. Karper}\thanks{This research was funded by the Research Council of Norway.}

\address[Karper]{\newline
Department of Mathematical Sciences, Norwegian University of Science and Technology (NTNU), Norway.}
\email[]{\href{karper@gmail.com}{karper@gmail.com}}
\urladdr{\href{http://folk.uio.no/~trygvekk}{folk.uio.no/\~{}trygvekk}}

\author[Weber]{Franziska Weber}

\address[Weber]{\newline
Department of Mathematics, University of Oslo, Norway.}
\email[]{\href{franziska.weber@cma.uio.no}{franziska.weber@cma.uio.no}}

\date{\today}

\subjclass[2010]{Primary:65M12, 65M06; Secondary:35L05}

\keywords{Wave map, structure preserving, finite differences, convergence, angular momentum, finite differences}

\maketitle
\begin{abstract}
In this paper, we present and analyze a new finite difference method for computing three dimensional wave maps into spheres.
By introducing the angular momentum as an auxiliary variable, 
we recast the governing equation as a first order system.
For this new system, we propose a discretization 
that  conserves both the \emph{energy} and the \emph{length constraint}.
The new method is also fast requiring 
 only $N\log N$ operations at each time step.
Our main result is that the method converges to a weak solution 
as discretization parameters go to zero.
The paper is concluded by numerical experiments 
demonstrating convergence of the method and its ability to predict 
finite time blow-up.

\end{abstract}

\section{Introduction}
The purpose of this paper is to develop a new 
numerical method for computing wave maps.
By wave maps, we here  mean vectors $d = [d_1,d_2,d_3]^T$ satisfying 
the following constrained wave equation:
\begin{equation}\label{eq:3Dwave}
    \begin{split}
            d_{tt} - \Delta d &= \gamma d, \quad
            |d| = 1,\quad \text{ in $(0,\infty)\times \Om$}.
    \end{split}
\end{equation}
Here, $\Om \subset \mathbb{R}^n$, $n=2,3$, is either assumed 
to be the unit box $\Om = [0,1]^n$ or it is assumed that $\Om = \mathbb{T}^n$, 
where $\mathbb{T}^n$ is the n-dimensional torus. In the first case, \eqref{eq:3Dwave}
is augmented with homogenous Neumann boundary conditions.

The $\gamma$ appearing in \eqref{eq:3Dwave} is a Lagrange multiplier 
enforcing the constraint $|d|=1$. In particular, by dotting 
\eqref{eq:3Dwave} with $d$ and using that $|d|=1$, one finds that
$$
\gamma = |\Grad d|^2 - |d_t|^2.
$$
Thus, \eqref{eq:3Dwave} is in this sense highly nonlinear which 
in turn obscures the task of developing conservative numerical methods. 
Moreover, in three spatial dimensions, 
it is known that solutions of \eqref{eq:3Dwave} may blow-up \cite{SS}.
Specifically, there is initial data for which the gradient $\Grad d$ develops singularities
in finite time. Thus, solutions of the wave map equation are not smooth.
We will return to the issue of blow-up in the  numerical section
(Section 5).

The literature on numerical methods  
for  \eqref{eq:3Dwave} seems to be confined 
to a handful of results. 
In the papers \cite{BPS, BFP, B}, the authors develops convergent splitting and relaxation methods. 
With these methods, \eqref{eq:3Dwave} is either
solved iteratively, using one evolution step and one projection step onto the sphere,
or the constraint $|d|=1$ is relaxed altogether. 
In the paper \cite{BLP},  the wave map equation \eqref{eq:3Dwave} is computed
using an approximate Lagrange multiplier $\gamma_h$. The approximate 
$\gamma_h$ is then designed such that the constraint $|d|=1$ 
holds. Clearly, this leads to a $\gamma_h$ which depends 
nonlinearly and implicit on the unknown $d$ and
hence leads to a rather unpractical method. 

The method we will develop in this paper differs significantly 
from the previously proposed methods.
The key observation allowing
us to deduce an energy and constraint preserving 
method is a new formulation of 
\eqref{eq:3Dwave}. Specifically, by introducing the \emph{angular momentum}:
$$
    w = d_t \times d.
$$
the wave map equation \eqref{eq:3Dwave} can be recast in the form
\begin{align}
    d_t &= d\times w, \label{eq:w1} \\
    w_t &= \Delta d\times  d \label{eq:w2}.
\end{align}
In this formulation, the constraint $|d|=1$ is inherit and 
hence there is no need for the Lagrange multiplier $\gamma$.
Constraint preserving time integration for this system is easily derived. Here, 
we will use the  first order integration:
\begin{equation}\label{integration}
    \begin{split}
        \frac{d^{m+1} - d^{m}}{\Delta t} &= w^{m+1/2}\times d^{m+1/2} \\
        \frac{w^{m+1} - w^{m}}{\Delta t} &= \Delta d^{m+1/2}\times d^{m+1/2},
    \end{split}
\end{equation}
where $d^{m+1/2}= \frac{1}{2}(d^m + d^{m+1})$ and similarly $w^{m+1/2}=\frac{1}{2}(w^m + w^{m+1})$. 
This integration method satisfies $|d^{m+1}| = |d^{m}|$. Moreover, by dotting the 
second equation with $w^{m+1/2}$ and adding the first equation dotted with $\Delta d^{m+1/2}$, 
one obtains 
\begin{equation*}
    \int_\Om |w^{m+1}|^2 + |\Grad d^{m+1}|^2~dx = \int_\Om |w^{m}|^2 + |\Grad d^{m}|^2~dx,
\end{equation*}
and hence the method also conserves the energy. 
To discretize \eqref{integration} in space, we will use a standard 
central difference approximation of the Laplace operator on 
a regular grid. 

The only potential downside of using the discretization \eqref{integration} is that it is nonlinear and 
implicit and hence requires implementing a fixed point solver. 
Moreover, this fixed point solver should be such that at least the length constraint
is conserved at every iteration.
In Section 4, we will give the details on how such a solver may be 
constructed and prove that a fixed point may be computed (up to any tolerance in energy norm)
using only $N \log N$ operations, where $N$ is the number of degrees of freedom of $d$.
In practice, finding a solution with tolerance $N^{-2}$ requires only around 
$5 - 10$ iterations depending on the regularity of the underlying solution, 
but not on $N$. Note that there is not much point in decreasing the tolerance 
beyond $N^{-2}$ as the discretization error of \eqref{integration} will then 
dominate the error. 

Our main theoretical result in this paper is that the new angular momentum method 
converges to a weak solution as discretization parameters go to zero. 
The proof of this fact will follow directly using energy arguments together 
with the observation that 
$$
\Delta d \times d = \Div (A(d)\Grad d), 
$$
for some matrix $A$. 

The remaining parts of this paper are structured as follows:
In the upcoming section, we will properly define the new  method and 
 prove some basic properties. Then, in Section 3, we will prove that the method
converges to a weak solution as discretization parameters go to zero.
In Section 4, we will provide a way 
to compute the needed fixed point through an iterative procedure and 
prove that a fixed point may be obtained using only $N \log N$ operations. 
In Section 5, the paper is concluded by a series of numerical experiments 
illuminating some of the properties of the new method.

\section{The angular momentum method}

Given a number of degrees of freedom $N$, we set $M=N^\frac{1}{n}$,
where $n=2$, $3$ is the spatial dimension,  
and assume that $M$ is an integer.
Next, we let $h = 1/M$ and set the time step $\Delta t = \kappa h$,
where $\kappa$ is some constant. The domain $\Om$
is then discretized in terms of the $N$ points
$$
x_{i,j,k} = (ih,jh, kh ), \quad i,j,k= 0,\ldots, M.
$$
To simplify notation, we introduce the multiindex $\bi\in \mathcal{I}_N :=\{0,\dots,M\}^3$ such that
we can write
$$
x_{\bi} = x_{i,j,k}.
$$
We will approximate $d$ at these points. Specifically, 
$$
d^m_{\bi} \approx d(m\Delta t , x_{\bi}).
$$
Next, let  $\mathbb{1}:=(1,0,0)$, $\mathbb{2}:=(0,1,0)$, and $\mathbb{3}:=(0,0,1)$.
Using these vectors, we then define
the forward and backward difference operators
\begin{equation*}
        D^+_j d_{\bi} = \frac{d_{\bi+ \bj} -d_{\bi}}{h}, \qquad D^-_j d_{\bi} = D^+_j d_{\bi-\bj}
\end{equation*}
respectively, for $j=1,2,3$, and $\bi\in \mathcal{I}_N$. The standard central
Laplace discretization is then defined as
\begin{equation*}
    \Delta_h d_{\bi} = \sum_{j=1}^3 D_j^+D_j^- d_{\bi}.
\end{equation*}
If we introduce the backward gradient
$\Grad_h = [D^-_1, D^-_2, D^-_3]^T$
and forward divergence $\Div_h v = D^+_1 v^{(1)} + D^+_2 v^{(2)} + D^+_3 v^{(3)}$,
we have the identity
\begin{equation*}
    \Div_h \Grad_h = \Delta_h,
\end{equation*}
which will be convenient in the upcoming analysis.

For time discretization, we will use the notation
\begin{equation*}
d^{m+1/2}:=\frac{d^m+d^{m+1}}{2},\quad D_t d^m=\frac{d^{m+1}-d^m}{\Dt}.
\end{equation*} 

To approximate the initial conditions, we shall use the operator
$$
\Pi [f]_{\bi} = \frac{1}{(h)^3}\int_{(i-0.5)h}^{(i+0.5)h}\int_{(j-0.5)h}^{(j+0.5)h}\int_{(k-0.5)h}^{(k+0.5)h}f(y)~dy,
$$
with the obvious modification if $d=2$.

We are now ready to state the new method.
\equationbox{
\begin{definition}\label{def:scheme}
Given initial data $d^0 \in H^1(\Om)$, $d^0_t \in L^2(\Om)$, let 
$$
    (d^0_{\bi}, ~w^0_{\bi}) = \left(\Pi[d^0]_{\bi}, \Pi[d_t^0 \times d^0]_{\bi}\right), \quad \forall \bi.
$$
Determine sequentially, 
$$
d^m_{\bi}, w^m_{\bi}, \qquad \forall \bi \in \mathcal{I}_N, \quad m=1, \ldots,
$$
by solving the nonlinear system
    \begin{align}
        D_t d_{\bi}^m &= d_{\bi}^{m+1/2}\times w_{\bi}^{m+1/2}, \label{num:1}\\
        D_t w_{\bi}^m &= \Delta_{h} d_{\bi}^{m+1/2}\times  d_{\bi}^{m+1/2}\label{num:2}.
    \end{align}
\end{definition}
}

We will now prove some fundamental properties of 
the new method.

\begin{lemma}\label{lem:constraint}
There exists a unique numerical solution to the method 
posed in Definition \ref{def:scheme}. Moreover, 
the length is preserved
\begin{equation}\label{const:length}
    |d_i^m| = |d_i^0| = 1,\quad \forall i, \quad m=0, \ldots, 
\end{equation}
and the energy is preserved 
\begin{equation}\label{const:energy}
    E_m = E_0, \quad  m=1, \ldots,
\end{equation}
where the energy is defined as
\begin{equation}\label{eq:defnrgy}
E_m=\frac{1}{2}\int_\Omega |\Grad_h d^m_h|^2+|w^m_h|^2\, dx.
\end{equation}
\end{lemma}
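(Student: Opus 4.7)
The proof splits into three parts: the length identity, the energy identity, and well-posedness of the implicit nonlinear step. The two conservation identities are algebraic consequences of the cross-product structure of \eqref{num:1}-\eqref{num:2}; well-posedness will follow by a standard finite-dimensional fixed-point argument combined with a contraction estimate.

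For \eqref{const:length}, the plan is to take the Euclidean dot product of \eqref{num:1} with $d_{\bi}^{m+1/2}$. The right-hand side vanishes since $d^{m+1/2}_{\bi}\times w^{m+1/2}_{\bi}$ is orthogonal to $d^{m+1/2}_{\bi}$, whereas the left-hand side equals $\tfrac{1}{2\Delta t}\bigl(|d^{m+1}_{\bi}|^{2}-|d^{m}_{\bi}|^{2}\bigr)$. Hence $|d^{m+1}_{\bi}|=|d^{m}_{\bi}|$; iterating in $m$ and using the assumption $|d^{0}_{\bi}|=1$ on the projected initial data gives \eqref{const:length}.

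For \eqref{const:energy}, I would take the discrete inner product (multiplication by $h^{n}$ and summation over $\bi\in\mathcal{I}_{N}$) of \eqref{num:2} with $w^{m+1/2}_{\bi}$. The left-hand side becomes $\tfrac{1}{2\Delta t}(\|w^{m+1}\|^{2}-\|w^{m}\|^{2})$ in the discrete $L^{2}$ norm. On the right, the scalar triple product identity yields
\begin{equation*}
(\Delta_{h}d^{m+1/2}_{\bi}\times d^{m+1/2}_{\bi})\cdot w^{m+1/2}_{\bi}=\Delta_{h}d^{m+1/2}_{\bi}\cdot(d^{m+1/2}_{\bi}\times w^{m+1/2}_{\bi})=\Delta_{h}d^{m+1/2}_{\bi}\cdot D_{t}d^{m}_{\bi},
\end{equation*}
where the last step invokes \eqref{num:1}. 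Discrete summation by parts, available through $\Delta_{h}=\Div_{h}\Grad_{h}$ with either periodicity on $\mathbb{T}^{n}$ or the Neumann extension on the box, then yields
\begin{equation*}
h^{n}\sum_{\bi}\Delta_{h}d^{m+1/2}_{\bi}\cdot D_{t}d^{m}_{\bi}=-\tfrac{1}{2\Delta t}\bigl(\|\Grad_{h}d^{m+1}\|^{2}-\|\Grad_{h}d^{m}\|^{2}\bigr).
\end{equation*}
Combining the two sides produces \eqref{const:energy}.

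For existence and uniqueness of $(d^{m+1},w^{m+1})$ at each step, I would recast \eqref{num:1}-\eqref{num:2} as a fixed-point equation for the midpoint pair $(\overline d,\overline w):=(d^{m+1/2},w^{m+1/2})$ via
\begin{equation*}
T(\overline d,\overline w)=\Bigl(d^{m}+\tfrac{\Delta t}{2}\,\overline d\times \overline w,\ w^{m}+\tfrac{\Delta t}{2}\,\Delta_{h}\overline d\times \overline d\Bigr),
\end{equation*}
and apply Brouwer's fixed-point theorem on a closed ball in the finite-dimensional space $(\mathbb{R}^{3})^{\mathcal{I}_{N}}\times(\mathbb{R}^{3})^{\mathcal{I}_{N}}$. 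The a priori bound that forces $T$ to map a sufficiently large ball into itself is precisely the energy identity applied to any hypothetical fixed point, which bounds its size in terms of $E_{0}$ uniformly in $\Delta t$. For uniqueness, I would subtract two candidate solutions, test the resulting linear system against the difference variables, and exploit $|d_{\bi}^{k}|=1$ together with the energy bound to render the nonlinear terms uniformly Lipschitz; a discrete Gronwall estimate then forces the difference to vanish under the scaling $\Delta t=\kappa h$ with $\kappa$ small. The principal obstacle is this last step, since the $\Delta_{h}$ term generates an $h^{-2}$ factor that must be absorbed by the CFL condition.
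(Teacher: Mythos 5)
Your proofs of \eqref{const:length} and \eqref{const:energy} are correct and essentially identical to the paper's: dot \eqref{num:1} with $d^{m+1/2}_{\bi}$ for the length, and test \eqref{num:2} with $w^{m+1/2}$, convert via the cyclic triple-product identity and \eqref{num:1}, and sum by parts using $\Delta_h=\Div_h\Grad_h$ for the energy. No issues there.

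The well-posedness part is where the real work lies (the paper defers it entirely to Section 4), and your sketch has two genuine gaps. First, the Brouwer step: an a priori bound on \emph{hypothetical fixed points} of $T$ does not make $T$ map a ball into itself, which is what Brouwer's theorem requires; the bilinear terms $\overline d\times\overline w$ and $\Delta_h\overline d\times\overline d$ grow quadratically, so $T$ certainly does not preserve large balls. The usual repair is the corollary of Brouwer stating that a continuous field $P$ with $P(x)\cdot x\ge 0$ on a sphere vanishes inside it, applied to $P=\mathrm{Id}-T$ in the energy inner product (where the cross terms cancel), but even then $\|\Grad_h\cdot\|_{L^2}$ is only a seminorm in the $d$-slot and the constant mode must be handled separately. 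Second, and more seriously, the claim that $|d|=1$ plus the energy bound renders the nonlinear terms \emph{uniformly} Lipschitz is precisely the crux, and the naive version fails in $3$D. Subtracting two solutions of \eqref{num:1} gives pointwise $|d_1^{m+1}-d_2^{m+1}|\bigl(1-\tfrac{\Dt}{2}|\overline w_1|\bigr)\le \tfrac{\Dt}{2}|w_1^{m+1}-w_2^{m+1}|$, so closing the loop this way needs $\Dt\|w\|_{L^\infty}$ small; the energy bound only controls $\|w\|_{L^2}$, and the inverse inequality costs $h^{-n/2}$, so under $\Dt=\kappa h$ the offending factor is $\kappa h^{1-n/2}$, which is unbounded for $n=3$. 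The paper avoids this by solving \eqref{num:1} exactly as a rotation $d^{m+1}=V(w^{m+1/2})d^m$ with $V$ the orthogonal Cayley-type matrix \eqref{eq:V}, and proving the $L^\infty$-free Lipschitz bound $\|[V(\overline u_1)-V(\overline u_2)]d\|_{L^2}\le C\Dt\|u_1-u_2\|_{L^2}$ — the estimates $I$, $II$, $III$ in Lemma \ref{lem:contraction}, where the denominators $1+\tfrac{\Dt^2}{4}|w|^2$ are essential. Only then does the $h^{-2}$ from $\Delta_h$, which you correctly flag, combine with the CFL condition into the contraction factor $C\Dt^2/h^2=C\kappa^2<1$, yielding existence and uniqueness simultaneously (Corollary \ref{cor:unique}). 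To make your route work you would have to supply an equivalent of this $V$-Lipschitz estimate; without it both your uniqueness argument and, implicitly, your continuity/compactness reasoning for Brouwer are incomplete.
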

\begin{proof}
The existence of a unique solution 
will be proved through a constructive iteration 
in Section 4. The proof can be found in Corollary \ref{cor:unique}.

That the length is conserved, \eqref{const:length}, follows immediately from \eqref{num:1}. 
Indeed, dotting with $d^{m+1/2}_h$ yields
$$
    D_t d_h^m\cdot d_h^{m+1/2} = 0.
$$
Finally, to prove \eqref{const:energy}, we calculate
\begin{equation*}
    \begin{split}
            D_t E_m &= \int_\Om w^{m+1/2}\cdot D_t w^m - \Delta_h d^{m+1/2}\cdot D_t d^m~dx \\
            &= \int_\Om \left(\Delta_h d^{m+1/2}\times d^{m+1/2}\right)\cdot w^{m+1/2}~ dx \\
            &\qquad -\int_\Om \left(d^{m+1/2}\times w^{m+1/2}\right)\cdot \Delta_h d^{m+1/2}~dx=0.
    \end{split}
\end{equation*}
This concludes the proof.
\end{proof}

\section{The method converges}

To prove that the method converges, it will be convenient to extend 
the numerical solution to all of $\Om$. 
For this purpose, we shall use the piecewise constant extension:
\begin{equation}\label{def:scheme-2}
    \begin{split}
        d^m_h(x) &= d_{\bi}^m, \quad x \in E_{\bi}, \\
        w^m_h(x) &= w_{\bi}^m, \quad x \in E_{\bi},
    \end{split}
\end{equation}
where $E_{\bi}=[(i-1/2) h, (i+1/2)h)\times [(j-1/2)h,(j+1/2)h)\times [(k-1/2)h,(k+1/2)h)$, $\bi=(i,j,k)\in \mathcal{I}_N$.
Observe that the numerical method can then be written 
\begin{align}
    D_t d^{m}_h &= d^{m+1/2}_h\times w_h^{m+1/2},  \label{num:3}\\
    D_t w_h^m &=  \Delta_h d_h^{m+1/2} \times d^{m+1/2}_h,\label{num:4}
\end{align}
where $\Delta_h$ is derived in the obvious way.

Our main result in this section is the following convergence result:
\begin{theorem}\label{thm:main}
Let $\{(d_h, w_h)\}_{h>0}$ be a sequence of numerical approximations obtained using Definition \ref{def:scheme}
and \eqref{def:scheme-2}, where $\Delta t = \kappa h$ for some constant $\kappa >0$. Then, as $h \rightarrow 0$,
 $d_h \rightarrow d$ a.e and in $L^p((0,\infty)\times \Om)$ for any $p< \infty$, 
 $w_h \weakstar w$ in $L^\infty(0,T;L^2(\Om))$, where $(d,w)$ is a weak solution of the wave map equation \eqref{eq:w1}--\eqref{eq:w2}.
 By a weak solution, we mean that $(d,w)$ satisfies
 \begin{equation}\label{eq:weak}
     \begin{split}
         \int_0^\infty \int_\Om d\phi_t + (d\times w)\phi~dxdt+ \int_{\Om} d^0  \phi(0,\cdot)\, dx &= 0, \\
         \int_0^\infty \int_\Om w\psi_t + (d\times \Grad^T d):\Grad^T \psi~dxdt+\int_{\Om} w^0  \psi(0,\cdot)\, dx &=0,
     \end{split}
 \end{equation}
for all  $\phi, \psi \in C^\infty_0([0,\infty)\times \Om;\mathbb{R}^n)$, where the supscript $T$ means 
the transposed gradient matrix.
\end{theorem}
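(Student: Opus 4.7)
The plan is to combine the a priori energy/length estimates of Lemma~\ref{lem:constraint} with a compactness argument on the piecewise constant extensions, and then pass to the limit using a discrete divergence identity that replaces the second-order nonlinear term by a first-order one. By Lemma~\ref{lem:constraint} we have $|d_h|\equiv 1$ pointwise and $\|w_h\|_{L^\infty(0,T;L^2(\Om))}^2+\|\Grad_h d_h\|_{L^\infty(0,T;L^2(\Om))}^2\le 2E_0$ uniformly in $h$. Feeding these bounds into \eqref{num:3} immediately yields $\|D_t d_h\|_{L^\infty(0,T;L^2(\Om))}\le C$. Combining the discrete $H^1$-bound in space, the $L^\infty$-bound from $|d_h|\equiv 1$, and the equicontinuity in time provided by $D_t d_h$, an Aubin--Lions/Kolmogorov--Fr\'echet argument (performed on the piecewise constant or, equivalently, a piecewise linear-in-time extension) produces a subsequence with $d_h\to d$ almost everywhere and in $L^p_{\mathrm{loc}}((0,\infty)\times\Om)$ for every $p<\infty$. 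Simultaneously $w_h\weakstar w$ and $D_j^+ d_h\weakstar \partial_j d$ in $L^\infty(0,T;L^2(\Om))$ for every $j$ and every $T>0$.

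The key algebraic step, which is the hard part and the reason the method is well adapted to weak convergence, is a discrete analogue of the identity $\Delta d\times d=\Div(\Grad d\times d)$ advertised in the introduction. Using $d_\bi\times d_\bi=0$ together with $\Delta_h=\sum_j D_j^+D_j^-$, a direct computation shows that, for every $\bi$,
\begin{equation*}
\Delta_h d_\bi\times d_\bi=\sum_{j=1}^3 D_j^-\!\bigl(D_j^+ d_\bi\times d_\bi\bigr).
\end{equation*}
Hence the second-order nonlinear term in \eqref{num:4} is already a discrete divergence, and one derivative can be moved onto the test function by discrete summation by parts. This reduces the passage to the limit in $\Delta_h d_h\times d_h$ to passing to the limit in a product of the weakly convergent factor $D_j^+ d_h$ and the strongly convergent factor $d_h$; without this identity no direct limit would be available, since $\Delta_h d_h$ is not uniformly bounded in any usable space.

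With this identity in hand, I test \eqref{num:1}--\eqref{num:2} against time-discrete samples of smooth compactly supported $\phi$ and $\psi$, perform Abel summation in time (absorbing the initial data), and perform discrete summation by parts in space in the $w$-equation. This produces the discrete analogue of \eqref{eq:weak} with residual error terms of size $O(h)+O(\Dt)$ arising from the midpoint averages $d^{m+1/2}$, $w^{m+1/2}$ and from piecewise constant interpolation; all such remainders are controlled by the uniform bounds above and vanish as $h\to 0$. In the nonlinear terms, the strong convergence of $d_h$ combined with the weak-$*$ convergence of $w_h$ and of $D_j^+ d_h$ yields
\begin{equation*}
d_h\times w_h\;\weakstar\; d\times w, \qquad D_j^+ d_h\times d_h\;\weakstar\; \partial_j d\times d,
\end{equation*}
in $L^\infty(0,T;L^2(\Om))$, which is exactly what is needed to identify the limit with \eqref{eq:weak}. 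Convergence of the initial data is straightforward: $\Pi[d^0]\to d^0$ in $L^2$ for $d^0\in H^1(\Om)$ and $\Pi[d_t^0\times d^0]\to d_t^0\times d^0$ in $L^2$ by standard properties of the cell average operator $\Pi$. The main conceptual obstacle is therefore isolated to the divergence identity above, everything else being bookkeeping of time averages, interpolation errors and weak--strong products.
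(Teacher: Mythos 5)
Your proposal is correct and follows essentially the same route as the paper: uniform energy and length bounds from Lemma \ref{lem:constraint}, compactness giving $d_h\to d$ a.e.\ and weak-$*$ convergence of $w_h$ and the discrete gradients, then discrete summation by parts and weak--strong limit passage in the products. Your identity $\Delta_h d_{\bi}\times d_{\bi}=\sum_{j} D_j^-\bigl(D_j^+ d_{\bi}\times d_{\bi}\bigr)$ is a correct, equivalent restatement of the paper's Lemma \ref{lem:cp} (the cancellation of $D_j^- d_{\bi}\times D_j^- d_{\bi}$ plays the role of the symmetric term $\Grad_h d^{(\ell)}\cdot\Grad_h d^{(k)}$ cancelling under antisymmetrization there), and the rest of your argument matches the paper's bookkeeping.
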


To prove this theorem, our starting point is Lemma \ref{lem:constraint}  yielding 
the $h$-independent bounds:
\begin{equation*}
    \begin{split}
        D_t d_h &\in_b L^\infty(0,\infty;L^2(\Om)), \\
        \Grad_h d_h&\in_b L^\infty(0,\infty;L^2(\Om)), \\
        w_h &\in_b L^\infty(0,\infty;L^2(\Om)),
    \end{split}
\end{equation*}
where the $\in_b$ means that the inclusion is independent of $h$.
From these bounds, we can assert the existence of functions $d$ and $w$,
and a subsequence $h_j$, such that 
\begin{equation}\label{eq:conv}
    \begin{split}
        w_h &\weakstar w \text{ in $L^\infty(0,\infty;L^2(\Om))$}, \\
        D_t d_h &\weakstar d_t \text{ in $L^\infty(0,\infty;L^2(\Om))$}, \\
        \Grad_h d_h &\weakstar \Grad d \text{ in $L^\infty(0,\infty;L^2(\Om))$}, \\
        d_h &\rightarrow d \text{ a.e and in $L^p((0,\infty)\times \Om)$ for $p < \infty$},
    \end{split}
\end{equation}
where the limit $d$ also satisfies the constraint
$$
|d(t,x)| = 1 \text{ a.e in $[0,\infty)\times \Om$}.
$$
To show that the limit pair $(d,w)$ is a weak solution of \eqref{eq:w1} -- \eqref{eq:w2}, we will need a vector identity.
It is this identity which allows us to pass to the limit without 
having higher-order bounds on $d$.
\begin{lemma}\label{lem:cp}
The following identity holds,
    \begin{equation}\label{eq:cp}
        \begin{split}
                d_h\times \Delta_h d_h 
                &= \Div_h \begin{pmatrix}
                    d_h^{(2)}\Grad_h^T d_h^{(3)} - d_h^{(3)} \Grad_h^T d_h^{(2)} \\
                    d_h^{(3)}\Grad_h^T d_h^{(1)} - d_h^{(1)} \Grad_h^T d_h^{(3)} \\
                    d_h^{(1)}\Grad_h^T d_h^{(2)} - d_h^{(2)} \Grad_h^T d_h^{(1)} 
                \end{pmatrix}.
        \end{split}
    \end{equation}
\end{lemma}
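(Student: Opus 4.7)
The plan is to reduce the vector identity to three component-wise scalar identities and then verify each by a careful application of the discrete product rule. Since the right-hand side of \eqref{eq:cp} has the same cyclic structure as the cross product $d \times \Delta d$, it suffices to establish that for each pair of indices $(a,b) \in \{(2,3),(3,1),(1,2)\}$,
\begin{equation*}
    \Div_h\bigl(d_h^{(a)} \Grad_h^T d_h^{(b)} - d_h^{(b)} \Grad_h^T d_h^{(a)}\bigr) = d_h^{(a)} \Delta_h d_h^{(b)} - d_h^{(b)} \Delta_h d_h^{(a)}.
\end{equation*}
Using $\Div_h \Grad_h = \Delta_h = \sum_{j=1}^3 D_j^+ D_j^-$, the problem further reduces to showing, for each direction $j$ and all $\bi\in \mathcal{I}_N$, that
\begin{equation*}
    D_j^+\bigl[d^{(a)} D_j^- d^{(b)} - d^{(b)} D_j^- d^{(a)}\bigr]_{\bi} = d^{(a)}_{\bi}\,(D_j^+ D_j^- d^{(b)})_{\bi} - d^{(b)}_{\bi}\,(D_j^+ D_j^- d^{(a)})_{\bi}.
\end{equation*}

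Next, I would expand the left-hand side using the discrete Leibniz rule in the form
\begin{equation*}
    D_j^+(fg)_{\bi} = f_{\bi}(D_j^+ g)_{\bi} + (D_j^+ f)_{\bi}\, g_{\bi+\bj},
\end{equation*}
applied with $f = d^{(a)}$ and $g = D_j^- d^{(b)}$ (and symmetrically with $a,b$ swapped). The crucial observation is the shift identity
\begin{equation*}
    (D_j^- d^{(b)})_{\bi+\bj} = \frac{d^{(b)}_{\bi+\bj} - d^{(b)}_{\bi}}{h} = (D_j^+ d^{(b)})_{\bi},
\end{equation*}
which turns the "boundary" term into $(D_j^+ d^{(a)})_{\bi}(D_j^+ d^{(b)})_{\bi}$. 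Writing out the analogous expansion for $D_j^+[d^{(b)} D_j^- d^{(a)}]_{\bi}$ yields the same symmetric cross-term $(D_j^+ d^{(a)})_{\bi}(D_j^+ d^{(b)})_{\bi}$, so the two cross-terms cancel upon subtraction and only the desired second-order terms survive.

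Summing the resulting identity over $j=1,2,3$ recovers $\Delta_h d^{(b)}$ and $\Delta_h d^{(a)}$ on the right, and $\Div_h (d^{(a)}\Grad_h^T d^{(b)} - d^{(b)}\Grad_h^T d^{(a)})$ on the left. Assembling the three component identities componentwise into the cross-product vector then produces exactly \eqref{eq:cp}.

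The main obstacle, and the reason some care is required, is that $D_j^+$ and $D_j^-$ do not commute with multiplication in a symmetric manner: the Leibniz rule for $D_j^+$ evaluates one factor at the shifted index $\bi+\bj$. The proof hinges on choosing the "right" form of the product rule so that, after the shift identity $(D_j^- d^{(b)})_{\bi+\bj} = (D_j^+ d^{(b)})_{\bi}$ is invoked, the stray first-order cross terms appear in a form that is manifestly symmetric in $a\leftrightarrow b$ and thus vanishes in the antisymmetric combination. Once that combinatorial identity is in hand, the rest is bookkeeping.
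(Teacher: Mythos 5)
Your proof is correct and takes essentially the same route as the paper: the paper states the discrete product-rule identity $d^{(k)}_{\bi}\Delta_h d^{(\ell)}_{\bi} = \Div_h(d^{(k)}_{\bi}\Grad_h d^{(\ell)}_{\bi}) - \Grad_h d^{(\ell)}_{\bi}\cdot\Grad_h d^{(k)}_{\bi}$ and lets the symmetric gradient term cancel in the antisymmetric combination, which is exactly the cancellation of your cross-terms $(D_j^+ d^{(a)})_{\bi}(D_j^+ d^{(b)})_{\bi}$. Your version merely makes explicit the directionwise Leibniz rule and the shift identity that the paper leaves as ``direct calculation.''
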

\begin{proof}
By direct calculation, we see that
\begin{equation*}
d^{(k)}_{\bi}\, \Delta_h d_{\bi}^{(\ell)} = \Div_h \left(d^{(k)}_{\bi} \Grad_h d_{\bi}^{(\ell)}\right)-\Grad_h d_{\bi}^{(\ell)}\cdot \Grad_h d_{\bi}^{(k)},
\end{equation*}
for $k,\ell\in\{1,2,3\}$.
Hence
\begin{equation*}
d^{(k)}_{\bi} \,\Delta_h d_{\bi}^{(\ell)} -d^{(\ell)}_{\bi} \,\Delta_h d_{\bi}^{(k)}=\Div_h \left(d^{(k)}_{\bi} \Grad_h d^{(\ell)}_{\bi} - d^{(\ell)}_{\bi} \Grad_h d^{(k)}_{\bi}\right)
\end{equation*}
and \eqref{eq:cp} follows.
\end{proof}

\subsection{Proof of Theorem \ref{thm:main}:}
For test functions $\varphi$, $\psi\in C^1_0([0,\infty)\times \Omega;\mathbb{R}^n)$, we denote $\varphi^m(x):=\varphi(t^m,x)$, $\psi^m(x):=\psi(t^m,x)$. Then we dot \eqref{num:3} and \eqref{num:4} with $\varphi^m, \psi^m$, integrate over $\Omega$, and sum over $m$, to discover
\begin{align*}
\Dt \sum_{m=0}^{\infty} \int_\Omega \left(D_t d^{m}_h -d^{m+1/2}_h\times w_h^{m+1/2}\right)\cdot \varphi^m \, dx &= 0, \\
\Dt \sum_{m=0}^{\infty} \int_\Omega \left(D_t w_h^m + d^{m+1/2}_h \times \Delta_h d_h^{m+1/2}\right)\cdot \psi^m \, dx  &= 0.
\end{align*}
Using Lemma \ref{lem:cp} and summation by parts, we deduce that
\begin{equation}\label{eq:wf}
\begin{split}
\Dt \sum_{m=0}^{\infty} \int_\Omega \left(- d^{m+1}_h\cdot D_t \varphi^{m} -\left(d^{m+1/2}_h\times w_h^{m+1/2}\right)\cdot \varphi^m\right)dx\quad&\\ 
 -\int_{\Om} d^0_h\cdot \varphi^0\, dx &= 0, \\
\Dt \sum_{m=0}^{\infty} \int_\Omega \left(- w_h^{m+1}\cdot  D_t \psi^{m} - \left(d^{m+1/2}_h \times \Grad_h^T d_h^{m+1/2}\right): \Grad_h^T \psi^m \right) dx\quad &\\
-\int_{\Om} w^0_h\cdot \psi^0\, dx  &= 0,
\end{split}
\end{equation}
We denote 
\begin{equation*}
    \begin{split}
        d_h(t,x) &= d_h^m(x), \quad x \in \Omega, \quad t \in (t^{m-1}, t^m],\\
        w_h(t,x) &= w_h^m(x), \quad x \in \Omega, \quad t \in (t^{m-1}, t^m],\\
	\overline{d}_h(t,x) &= d_h^{m-1/2}(x), \quad x \in \Omega, \quad t \in (t^{m-1}, t^m],\\
        \overline{w}_h(t,x) &= w_h^{m-1/2}(x), \quad x \in \Omega, \quad t \in (t^{m-1}, t^m],\\
    \end{split}
\end{equation*}
such that \eqref{eq:wf} becomes
\begin{equation*}
\begin{split}
-\int_0^\infty\int_\Omega \left( d_h\cdot D_t \varphi +\left(\overline{d}_h\times \overline{w}_h\right)\cdot \varphi\right)\, dx\,dt - \int_{\Om} d_h^0\cdot \varphi(0,\cdot)\, dx &= 0, \\
-\int_0^\infty\int_\Omega \left(w_h\cdot  D_t \psi + \left(\overline{d}_h \times \Grad_h^T \overline{d}_h\right): \Grad_h^T \psi \right) \, dx  - \int_{\Om} w_h^0\cdot \psi(0,\cdot)\, dx  &= 0.
\end{split}
\end{equation*}

Now, since $\Grad_h \psi\rightarrow \Grad \psi$ a.e and $(D_t\varphi,D_t \psi)\rightarrow (\varphi_t,\psi_t)$ a.e, 
there is no problems with applying the convergences \eqref{eq:conv} to discover that the limit $(d,w)$ satisfies \eqref{eq:weak}.
Hence, $(d,w)$ is a weak solution of \eqref{eq:w1}--\eqref{eq:w2} and the proof of Theorem \ref{thm:main} is complete.
\qed

\section{A solution may be obtained fast}
The new \emph{angular momentum} method  (Definition \ref{def:scheme}) 
is both nonlinear and implicit. Hence, in practice, finding 
a solution requires solving a fixed point problem at each time step. 
In this section, we will construct a fixed point iteration scheme 
and prove that this scheme provides 
the desired solution using only $N \log N$ operations.

To find a solution of \eqref{num:1}--\eqref{num:2}, we propose the following 
iterative scheme:
\equationbox{
\begin{definition}\label{def:fixed}
Given $h >0 $, $\Delta t = \kappa h$, and functions 
$(d^m_h, w^m_h)$ satisfying \eqref{num:1}--\eqref{num:2}, we approximate
the next time-step $(d^{m+1}_h, w^{m+1}_h)$ to a given 
tolerance $\eps > 0$ by the following procedure: Set
\begin{equation*}
    (d_h^{m, 0}, w_h^{m,0}) = (d_h^m, w_h^m),
\end{equation*}
and iteratively solve $(d_h^{m,s+1}, w_h^{m,s+1})$ satisfying
\begin{equation}\label{num:fixed}
    \begin{split}
        \frac{d^{m,s+1}_h-d^m_h}{\Delta t} &= \frac{1}{2}\left(d_h^m + d_h^{m,s+1}\right)\times \frac{1}{2}\left(w^{m}_h + w^{m,s}_h\right), \\
        \frac{w^{m,s+1}_h-w^m_h}{\Delta t} &= \frac{1}{2}\left(\Delta_h d_h^{m} + \Delta_h d_h^{m,s+1}\right)\times \frac{1}{2}\left(d_h^m + d_h^{m,s+1}\right),
    \end{split}
\end{equation}
until the following stopping criteria is met:
\begin{equation}\label{eq:stop}
    \left\|w_h^{m, s+1} - w_h^{m,s}\right\|_{L^2(\Om)} + \left\|\Grad d_h^{m,s+1} - \Grad d_h^{m,s}\right\|_{L^2(\Om)} < \eps.
\end{equation}

\end{definition}
}

Clearly, if the iteration  \eqref{num:fixed} yields a fixed point $w_h^{m,s+1} = w_h^{m,s}$, then $(d^{m+1}_h, w_h^{m+1}) = (d_h^{m,s+1},w_h^{m,s+1})$  is a solution to the nonlinear scheme
\eqref{num:1}--\eqref{num:2}. Moreover, the iteration in \eqref{num:fixed} is put
up precisely such that the length is preserved at each iteration:
$$
|d_h^{m,s}| = |d_h^m| = 1 \quad \text{in $\Om$}.
$$

Seen from the practical point of view, the remaining questions are whether the iteration 
converges or not and, if so, how many iterations that are needed to reach 
the given tolerance $\eps$. The following theorem provides an answer
to these questions and is our main result in this section.

\begin{theorem}\label{thm:it}
Given $h > 0$, $\Delta t = \kappa h$ for a sufficiently small 
$\kappa > 0$, 
and a small tolerance $\eps > 0$, 
there is a number of iterations $\bar s \in \mathbb{N}_+$, $\bar s\leq C |\log \eps|$, such that
\eqref{eq:stop} holds and the error
\begin{equation}\label{eq:error}
    \left\|w_h^{m+1} - w_h^{m,  s}\right\|_{L^2(\Om)} + \left\|\Grad d_h^{m+1} - \Grad d_h^{m, s}\right\|_{L^2(\Om)} < \eps, \quad \forall s \geq \bar s.
\end{equation}
\end{theorem}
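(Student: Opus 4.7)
The plan is to apply a Banach fixed-point argument in the discrete energy-type norm $\|(d,w)\|^2:=\|\Grad_h d\|_{L^2}^2+\|w\|_{L^2}^2$: I would show that the iteration (\ref{num:fixed}) contracts this norm with factor $q=q(\kappa)<1$ for $\kappa$ sufficiently small, and the logarithmic iteration count $\bar s\leq C|\log\eps|$, together with the tail estimate (\ref{eq:error}), then follow from geometric convergence. First, each step of (\ref{num:fixed}) is well-defined: the first equation, written pointwise, reads
\begin{equation*}
\bigl(I+\tfrac{\Dt}{4}[(w_h^m+w_h^{m,s})\times]\bigr)\,d_h^{m,s+1}=\bigl(I-\tfrac{\Dt}{4}[(w_h^m+w_h^{m,s})\times]\bigr)\,d_h^m,
\end{equation*}
where $[\,\cdot\,\times]$ denotes the antisymmetric matrix representing the cross product. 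The operator on the left is an antisymmetric perturbation of the identity, hence invertible with operator norm at most $1$; the full update is the corresponding Cayley-transform rotation, so $|d_h^{m,s+1}|=|d_h^m|=1$ pointwise, and $w_h^{m,s+1}$ is then given explicitly by the second equation.

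Next, I would introduce the iteration errors $E_d^s:=d_h^{m,s+1}-d_h^{m,s}$ and $E_w^s:=w_h^{m,s+1}-w_h^{m,s}$ and subtract consecutive iterates of (\ref{num:fixed}) to derive
\begin{equation*}
\bigl(I+\tfrac{\Dt}{4}[(w_h^m+w_h^{m,s-1})\times]\bigr)E_d^s=\tfrac{\Dt}{4}(d_h^m+d_h^{m,s+1})\times E_w^{s-1},
\end{equation*}
\begin{equation*}
E_w^s=\tfrac{\Dt}{4}\Delta_h E_d^s\times(d_h^m+d_h^{m,s+1})+\tfrac{\Dt}{4}(\Delta_h d_h^m+\Delta_h d_h^{m,s})\times E_d^s.
\end{equation*}
The same antisymmetry argument as above applied to the first line gives the clean pointwise bound $|E_d^s|\leq \tfrac{\Dt}{2}|E_w^{s-1}|$. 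For the second line, the key manipulation is the observation that, by linearity of $\Delta_h$, $(\Delta_h d_h^m+\Delta_h d_h^{m,s+1})\times(d_h^m+d_h^{m,s+1})=\Delta_h Q^{s+1}\times Q^{s+1}$ with $Q^{s+1}:=d_h^m+d_h^{m,s+1}$; Lemma \ref{lem:cp} then rewrites this as $-\Div_h\mathcal{F}(Q^{s+1})$, where $\mathcal{F}$ is the first-order flux matrix appearing in (\ref{eq:cp}). Performing the same reduction on the $s$-th iterate yields the divergence-form identity
\begin{equation*}
E_w^s=-\tfrac{\Dt}{4}\Div_h\bigl[\mathcal{F}(Q^{s+1})-\mathcal{F}(Q^s)\bigr],
\end{equation*}
which trades the dangerous second-order $\Delta_h E_d^s$ for a single $\Div_h$ acting on a bilinear expression in $(d,\Grad_h d)$ and $(E_d^s,\Grad_h E_d^s)$.

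The contraction estimate now follows: bounding $\mathcal{F}(Q^{s+1})-\mathcal{F}(Q^s)$ in $L^2$ by H\"older, using $|d|\leq 1$, the energy bound on $\|\Grad_h d\|_{L^2}$, and the pointwise control of $E_d^s$ above, and then applying the inverse inequality $\|\Div_h g\|_{L^2}\leq Ch^{-1}\|g\|_{L^2}$ on the piecewise constant space (the factor $h^{-1}$ being exactly absorbed by $\Dt=\kappa h$), produces a bound of the form $\|E_w^s\|_{L^2}\leq C\kappa\bigl(\|\Grad_h E_d^s\|_{L^2}+\|E_w^{s-1}\|_{L^2}\bigr)$. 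A parallel computation on the $E_d^s$ equation, applying $\Grad_h$, the discrete product rule, and the inverse inequality $\|\Grad_h E_w^{s-1}\|_{L^2}\leq Ch^{-1}\|E_w^{s-1}\|_{L^2}$, gives $\|\Grad_h E_d^s\|_{L^2}\leq C\kappa\|E_w^{s-1}\|_{L^2}$. Adding these and taking $\kappa$ small enough that $q=q(\kappa)<1$ yields the contraction $\|\Grad_h E_d^s\|_{L^2}+\|E_w^s\|_{L^2}\leq q\bigl(\|\Grad_h E_d^{s-1}\|_{L^2}+\|E_w^{s-1}\|_{L^2}\bigr)$. Since $\|\Grad_h E_d^0\|_{L^2}+\|E_w^0\|_{L^2}$ is bounded by a constant depending only on $E_0$ (computed directly from the first iterate), geometric contraction gives (\ref{eq:stop}) for $s\geq \bar s\leq C|\log\eps|$, and summing the geometric tail yields (\ref{eq:error}). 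The main technical obstacle is precisely the $\Delta_h E_d^s$ term in the $w$-update: without the divergence reformulation via Lemma \ref{lem:cp}, a naive inverse inequality $\|\Delta_h E_d^s\|_{L^2}\lesssim h^{-2}\|E_d^s\|_{L^2}$ would force the much more restrictive scaling $\kappa\lesssim h$, destroying the constant-CFL contraction that makes the method genuinely fast.
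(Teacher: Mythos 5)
Your overall architecture --- Banach fixed point in a discrete energy-type norm, geometric convergence with factor $q(\kappa)<1$, the count $\bar s\leq C|\log\eps|$, and the tail sum for \eqref{eq:error} --- matches the paper's, and the decisive ingredient is the same in both: the difference of two $d$-iterates carries an extra factor $\Dt$ relative to the difference of the corresponding $w$-iterates (your pointwise bound $|E_d^s|\leq\frac{\Dt}{2}|E_w^{s-1}|$). The paper packages this differently: it solves the $d$-update explicitly as $d^{m+1}_{\bi}=V(w^{m+1/2}_{\bi})\,d^m_{\bi}$ with the Cayley matrix $V$ of \eqref{eq:V}, reduces everything to a scalar fixed-point map $F_m$ acting on $w$ alone (Definition \ref{def:Fm}), and proves contraction of $F_m$ in $L^2(\Om)$ (Lemma \ref{lem:contraction}); the $\Dt$-gain you extract pointwise appears there as the Lipschitz estimate $\|[V(\oxe)-V(\oxz)]d\|_{L^2(\Om)}\leq C\Dt\|u_1-u_2\|_{L^2(\Om)}$, proved term by term from \eqref{eq:V}. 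Your coupled $(d,w)$ formulation is a legitimate alternative and arguably more transparent.

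Two points need repair. First, your closing claim --- that without the divergence reformulation via Lemma \ref{lem:cp} the naive inverse inequality $\|\Delta_h E_d^s\|_{L^2(\Om)}\lesssim h^{-2}\|E_d^s\|_{L^2(\Om)}$ would force $\kappa\lesssim h$ --- is wrong, and the paper's own proof is a counterexample: in \eqref{eq:Y} the paper applies the inverse inequality twice to get $C\Dt h^{-2}\|[V(\oxe)-V(\oxz)]d\|_{L^2(\Om)}\leq C\Dt^2h^{-2}\|u_1-u_2\|_{L^2(\Om)}=C\kappa^2\|u_1-u_2\|_{L^2(\Om)}$, i.e.\ the $h^{-2}$ is absorbed by \emph{two} factors of $\Dt$ --- the explicit one in the scheme and the one hidden in the $d$-difference --- so the constant-CFL contraction survives without Lemma \ref{lem:cp}, which is only needed for the limit passage in Theorem \ref{thm:main}. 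Second, your H\"older step for the flux difference does not close as stated: the term of type $E_d^s\,\Grad_h Q^{s+1}$ cannot be put in $L^2(\Om)$ by pairing the energy bound on $\|\Grad_h d\|_{L^2(\Om)}$ with $L^2$ control of $E_d^s$ (that only gives $L^1$), and pairing $\|E_d^s\|_{L^\infty}$ with $\|\Grad_h Q^{s+1}\|_{L^2(\Om)}$ would require $\|E_w^{s-1}\|_{L^\infty}$, costing an uncompensated $h^{-n/2}$. The correct pairing is $\|\Grad_h Q^{s+1}\|_{L^\infty}\leq Ch^{-1}$ (from $|Q^{s+1}|\leq 2$ pointwise, which your Cayley argument guarantees) against $\|E_d^s\|_{L^2(\Om)}\leq\frac{\Dt}{2}\|E_w^{s-1}\|_{L^2(\Om)}$, again trading an $h^{-1}$ for a $\Dt$. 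With that fix your argument closes, with essentially the same contraction factor $q=C\kappa^2$ as the paper obtains.
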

The proof of this theorem will follow as a consequence of the 
results stated and proved in the remaining parts of this section.
\begin{remark}
    In Theorem \ref{thm:it}, we need that $\kappa$ is sufficiently small. Upon inspecting 
    the upcoming proof, one can derive that $\kappa \leq \frac{1}{50}$. However, 
    in practice, it is sufficient to have $\kappa \leq \frac{1}{2}$. This is the only 
    instance at which we need to require anything on $\kappa$.
\end{remark}

As an immediate corollary of Theorem \ref{thm:it}, we have that a desired solution may be 
computed in $N\log N$ operations:
\begin{corollary}\label{cor:it}
For a given tolerance $\eps = N^{-\alpha}$, the functions $(d_h^{m,\bar s}, w_h^{m, \bar s})$
in Theorem \ref{thm:it} may be computed using only $O(N\log N)$ operations.
\end{corollary}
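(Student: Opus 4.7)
The plan is to combine Theorem \ref{thm:it} with a per-iteration cost analysis of the update \eqref{num:fixed}. With the tolerance $\eps = N^{-\alpha}$, Theorem \ref{thm:it} immediately gives a bound on the number of iterations:
\begin{equation*}
    \bar s \leq C|\log \eps| = C\alpha \log N = O(\log N).
\end{equation*}
It therefore suffices to show that each iteration in \eqref{num:fixed} costs only $O(N)$ operations, after which the total work is bounded by $O(\log N) \cdot O(N) = O(N\log N)$.

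Next I would analyze a single iteration step. The key structural observation is that \eqref{num:fixed} is implicit only in a pointwise, decoupled sense. Rewriting the first equation for the update of $d_h^{m,s+1}$ at a fixed node $\bi$, and letting $W_\bi = \tfrac{1}{2}(w^m_{h,\bi}+w^{m,s}_{h,\bi})$, one obtains
\begin{equation*}
   \Bigl(I - \tfrac{\Delta t}{2}\,[W_\bi]_\times\Bigr) d^{m,s+1}_{h,\bi} = \Bigl(I + \tfrac{\Delta t}{2}\,[W_\bi]_\times\Bigr) d^{m}_{h,\bi},
\end{equation*}
where $[W_\bi]_\times$ is the $3\times 3$ skew matrix representing $v\mapsto v\times W_\bi$. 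This is a local $3\times 3$ linear system at each of the $N$ grid points, solvable in $O(1)$ work per node, hence $O(N)$ in total. Once $d_h^{m,s+1}$ is known on the whole grid, the discrete Laplacian $\Delta_h d_h^{m,s+1}$ is computed from the fixed-size central stencil in $O(N)$ operations, and the second equation in \eqref{num:fixed} is then fully explicit for $w_h^{m,s+1}$, requiring only one further pointwise cross product per node, again $O(N)$. So the per-iteration cost is $O(N)$, and the stated $O(N\log N)$ complexity follows.

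The main (and only) conceptual point to highlight is that \emph{no global linear solve} is required inside the iteration: thanks to the cross-product structure of \eqref{num:fixed}, the spatial coupling is present only through the explicit term $\Delta_h d_h^{m,s+1}$, while the implicitness is purely pointwise. The remaining contribution to the claim is then a direct insertion of $\eps=N^{-\alpha}$ into the logarithmic iteration bound of Theorem \ref{thm:it}. There is no real obstacle; the whole statement amounts to bookkeeping the cost per iteration against the logarithmic iteration count.
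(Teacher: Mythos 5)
Your proposal is correct and follows essentially the same route as the paper: bound the iteration count by $O(|\log\eps|)=O(\log N)$ via Theorem \ref{thm:it} and multiply by an $O(N)$ per-iteration cost. The paper simply asserts that each iteration costs $N$ operations, whereas you justify it by noting that the implicitness in \eqref{num:fixed} is purely pointwise (a $3\times3$ system per node, consistent with the explicit form $d^{m+1}_{\bi}=V(w^{m+1/2}_{\bi})d^m_{\bi}$ in \eqref{eq:weissnoed}) and the Laplacian is a fixed-size stencil; this is a welcome elaboration but not a different argument.
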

\begin{proof}
Since each iteration requires $N$ operations and we need $O(|\log \eps|)$
iterations, we get a total of $O(N |\log \eps|)$ iterations 
and the proof follows by inserting $\eps = N^{-\alpha}$.
\end{proof}

Another consequence of Theorem \ref{thm:it} is that the energy 
at the stopping time $\bar s$ is almost conserved:
\begin{corollary}\label{cor:energy}
Under the conditions of Theorem \ref{thm:it},
\begin{equation*}
    E_m^{\bar s}:=\frac{1}{2}\left(\left\|w_h^{m,\bar s}\right\|_{L^2(\Om)}^2 + \left\|\Grad_h d_h^{m,\bar s}\right\|_{L^2(\Om)}^2\right)
    = E_0 + \mathcal{O}\left( \eps^2\right).
\end{equation*}
\end{corollary}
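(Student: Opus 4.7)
The starting point is Lemma \ref{lem:constraint}, which guarantees that the exact nonlinear update $(d_h^{m+1},w_h^{m+1})$ satisfies $E_{m+1}=E_0$ exactly. The plan is to adapt that energy-preservation computation to the fixed-point iteration \eqref{num:fixed}, isolate the residual produced by the lagged midpoint appearing in \eqref{num:fixed}, and quantify it using the stopping criterion \eqref{eq:stop} together with the error bound \eqref{eq:error}.

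\textbf{Step 1 (iteration energy identity).} Mimicking the proof of Lemma \ref{lem:constraint}, I would take the inner product of the first equation of \eqref{num:fixed} with $-\Delta_h d_h^{m,s+1/2}$, where $d_h^{m,s+1/2}:=\tfrac12(d_h^m+d_h^{m,s+1})$, and of the second with the symmetric midpoint $\tfrac12(w_h^m+w_h^{m,s+1})$. Summation by parts on the Laplacian (using $\Div_h\Grad_h=\Delta_h$) turns these into the changes in $\tfrac12\|\Grad_h d_h\|_{L^2(\Om)}^2$ and $\tfrac12\|w_h\|_{L^2(\Om)}^2$, respectively. In the clean scheme \eqref{num:1}--\eqref{num:2} the two resulting triple-product contributions cancel exactly; in \eqref{num:fixed} the first equation uses the \emph{lagged} midpoint $\tfrac12(w_h^m+w_h^{m,s})$ in place of $\tfrac12(w_h^m+w_h^{m,s+1})$, so the cancellation leaves a single residual proportional to $w_h^{m,s+1}-w_h^{m,s}$. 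Eliminating the cross-product factor via the second equation of \eqref{num:fixed} collapses the identity to
\begin{equation*}
E_m^{s+1}-E_0 \;=\; \tfrac12\int_{\Om}\bigl(w_h^{m,s+1}-w_h^m\bigr)\cdot\bigl(w_h^{m,s+1}-w_h^{m,s}\bigr)\,dx.
\end{equation*}

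\textbf{Step 2 (bounding via the stopping criterion and the error bound).} Setting $s+1=\bar s$ and applying Cauchy--Schwarz, I obtain
\begin{equation*}
\bigl|E_m^{\bar s}-E_0\bigr|\;\leq\;\tfrac12\,\|w_h^{m,\bar s}-w_h^m\|_{L^2(\Om)}\;\|w_h^{m,\bar s}-w_h^{m,\bar s-1}\|_{L^2(\Om)}.
\end{equation*}
The second factor is at most $\eps$ directly from the stopping criterion \eqref{eq:stop}. For the first factor I would decompose $w_h^{m,\bar s}-w_h^m=(w_h^{m,\bar s}-w_h^{m+1})+(w_h^{m+1}-w_h^m)$: the first summand is bounded by $\eps$ by \eqref{eq:error}, and the contractivity estimate underlying the proof of Theorem \ref{thm:it} is used to sharpen the control on the genuine fixed-point displacement $w_h^{m+1}-w_h^m$ so that it too contributes at most $\mathcal{O}(\eps)$. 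Combining the two $\eps$-bounds through the Cauchy--Schwarz inequality then delivers $|E_m^{\bar s}-E_0|=\mathcal{O}(\eps^2)$.

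\textbf{Main obstacle.} The delicate step is the last one. A naive bound $\|w_h^{m+1}-w_h^m\|_{L^2(\Om)}\leq \Delta t\,\|\Delta_h d_h^{m+1/2}\|_{L^2(\Om)}$ combined with the inverse inequality $\|\Delta_h d_h\|_{L^2(\Om)}\lesssim h^{-1}\|\Grad_h d_h\|_{L^2(\Om)}$ is only $\mathcal{O}(\kappa)$, which by itself would deliver the weaker $|E_m^{\bar s}-E_0|=\mathcal{O}(\kappa\eps)$ rather than the announced $\mathcal{O}(\eps^2)$. Upgrading to the squared power of $\eps$ requires the contraction of the iteration established in Theorem \ref{thm:it}: once $\bar s=\mathcal{O}(|\log\eps|)$ iterations have been performed, the reverse-triangle-style estimate $\|x^s-x^*\|\leq\|x^{s+1}-x^s\|/(1-r)$ (for a contraction of ratio $r<1$) lets the stopping increment dominate the distance to the fixed point, and one then trades the physical displacement $\|w_h^{m+1}-w_h^m\|_{L^2(\Om)}$ for the iteration increment $\|w_h^{m,\bar s}-w_h^{m,\bar s-1}\|_{L^2(\Om)}<\eps$. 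This contraction-based sharpening is the real technical content of the corollary beyond the identity of Step 1.
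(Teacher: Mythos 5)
Your Step 1 reproduces the paper's argument exactly: pairing the two equations of \eqref{num:fixed} so that the triple products cancel up to the lag in the $w$-midpoint yields precisely the identity
\begin{equation*}
E_m^{s+1}-E_0=\frac{1}{2}\int_{\Om}\bigl(w_h^{m,s+1}-w_h^m\bigr)\cdot\bigl(w_h^{m,s+1}-w_h^{m,s}\bigr)\,dx
\end{equation*}
(the paper tests the second equation with the lagged midpoint $\tfrac12(w_h^{m,s}+w_h^m)$ rather than the symmetric one, but the residual is the same), and the Cauchy--Schwarz step with the stopping criterion \eqref{eq:stop} controlling the factor $\|w_h^{m,\bar s}-w_h^{m,\bar s-1}\|_{L^2(\Om)}$ is also how the paper proceeds.

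The gap is in your treatment of the other factor. The contraction property of $F_m$ cannot make $\|w_h^{m+1}-w_h^m\|_{L^2(\Om)}$ small in $\eps$: the estimate $\|x^s-x^*\|\leq\|x^{s+1}-x^s\|/(1-q)$ controls the distance from the \emph{current iterate} to the fixed point, whereas $\|w_h^{m+1}-w_h^m\|_{L^2(\Om)}=\|x^*-x^0\|$ is the distance from the \emph{initial guess} to the fixed point. That quantity is determined once and for all by the scheme --- from \eqref{num:2}, $|d_h^{m+1/2}|\le 1$ and the inverse inequality it is of size $O(\kappa\sqrt{E_0})$ --- and no amount of iterating reduces it, so the ``trade'' described in your final paragraph is not available. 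Your argument therefore delivers only $|E_m^{\bar s}-E_0|\leq\tfrac12\,(\eps+C\kappa\sqrt{E_0})\,\eps=O(\eps)$ rather than $O(\eps^2)$. For what it is worth, the paper's own proof bounds the product by $\eps^2/2$ by treating both Cauchy--Schwarz factors as $\leq\eps$; the bound $\|w_h^{m,\bar s}-w_h^m\|_{L^2(\Om)}\leq\eps$ needed for the first factor is exactly the point you flagged, and it does not follow from \eqref{eq:stop} or \eqref{eq:error} either, since those control the distance to $w_h^{m+1}$, not to $w_h^m$. So you have correctly located the delicate step, but neither your contraction-based repair nor the paper's assertion closes it; what this line of argument actually proves is $E_m^{\bar s}=E_0+O(\eps)$.
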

\begin{proof}
By multiplying the first equation in \eqref{num:fixed} with $-\frac{1}{2}\Delta_h (d_h^{m,s+1} + d_h^{m})$,
the second equation with $\frac{1}{2}(w_h^{m,s} + w_h^m)$, and integrating by parts, we obtain that
\begin{equation*}
    \begin{split}
        &\frac{1}{2}\left(\left\|w_h^{m,s+1}\right\|_{L^2(\Om)}^2 + \left\|\Grad_h d_h^{m,s+1}\right\|_{L^2(\Om)}^2\right) \\
        &\qquad = E_m + \frac{1}{2}\int_\Om (w_h^{m,s+1} - w_h^m)(w_h^{m,s+1}-w_h^{m,s})~dx \\
        &\qquad = E_0 + \frac{1}{2}\int_\Om (w_h^{m,s+1} - w_h^m)(w_h^{m,s+1}-w_h^{m,s})~dx,
    \end{split}
\end{equation*}
where the last inequality follows from Lemma \ref{lem:constraint}. 

Finally, we 
assume that $s>\bar s$ such that both \eqref{eq:stop} and \eqref{eq:error} holds. 
The Cauchy-Schwarts inequality then provides the estimate
\begin{equation*}
    \begin{split}
        &\frac{1}{2}\int_\Om (w_h^{m,s+1} - w_h^m)(w_h^{m,s+1}-w_h^{m,s})~dx \\
        & \leq \frac{1}{2}\|w_h^{m,s+1} - w_h^m\|_{L^2(\Om)}\|w_h^{m,s+1}-w_h^{m,s}\|_{L^2(\Om)} \leq \frac{\eps^2}{2},
    \end{split}
\end{equation*}
which brings the proof to an end.
\end{proof}

\begin{remark}
In practice, the $(d_h^m,w_h^m)$ appearing in the fixed point scheme \eqref{num:fixed}
would be the approximation coming from the previous time-step. In this case Corollary \ref{cor:energy}
tell us that the "error" will be summed and thus
\begin{equation*}
    E_m^{\bar s} = E_0 + \mathcal{O}\left(m \eps^2\right).
\end{equation*}
\end{remark}

\subsection{The fixed point map $F_m$}
To prove Theorem \ref{thm:it}, it will be convenient 
to write the fixed point iteration in terms of a map. 
To define this map, we first notice that \eqref{num:1}, 
can be rewritten as
\begin{equation}\label{eq:weissnoed}
d^{m+1}_{\bi}=V(w^{m+1/2}_{\bi})\, d^m_{\bi}
\end{equation}
where $V=V(w)$ is the following matrix
{
\begin{align}\label{eq:V}
V(w)&=
\frac{1}{1+\frac{\Dt^2}{4}|w|^2}\left(\left(1-\frac{\Dt^2}{4}|w|^2\right) \mathbb{1}+ \frac{\Dt^2}{2}(w \otimes w)+\Dt Q(w) \right),
\end{align}}
and $Q(w)$ is defined as
\begin{equation*}
Q(w)=\begin{pmatrix}
0 & w^{(3)} & -w^{(2)}\\
-w^{(3)} & 0 & w^{(1)}\\
w^{(2)} & -w^{(1)} & 0 \end{pmatrix}.
\end{equation*}
In particular, $Q(\cdot)$ is such that
\begin{equation*}
Q(w) v=v\times w
\end{equation*}
for any vector $v\in\mathbb{R}^3$. Note that $V$ is an orthogonal matrix, and therefore, independently of $w$,
\begin{equation*}
|V(w) v|^2=|v|^2
\end{equation*}
for any $v\in \mathbb{R}^3$. 

To prove the theorem, we will demonstrate that $w^{m+1}_h$ is the fixed point of a contractive mapping $F_m$ which is defined as follows:
\begin{definition}[The mapping $F_m$]\label{def:Fm}
For a piecewise constant function $u_h$ on $\Omega$,
\begin{equation}\label{eq:pwc}
u_h(x) = u_{\bi}, \quad x \in E_{\bi},\, \bi\in \mathcal{I}_N,
\end{equation}
for some $\{u_{\bi}\}_{\bi\in\mathcal{I}_N}$, we define the piecewise constant function $v_h:=F_m(u_h)$ by
\begin{equation*}
v_h(x) = v_{\bi}, \quad x \in E_{\bi},\, \bi\in \mathcal{I}_N,
\end{equation*}
where $v_{\bi}$, $\bi\in \mathcal{I}_N$ is given by
\begin{align}\label{eq:Fm}
\begin{split}
v_{\bi}&=w^m_{\bi}+\Dt\bigl[\Delta_h \left( V(\overline{u}_{\bi}) d^m_{\bi}\right)\bigr]\times \left(V(\overline{u}_{\bi}) d^m_{\bi}\right)\\
\overline{u}_{\bi}&=\frac{w^m_{\bi}+u_{\bi}}{2},\quad \bi\in \mathcal{I}_N.
\end{split}
\end{align}
\end{definition}
A fixed point $v_h=v^*_h$ of $F_m$, will be a solution to \eqref{num:2} and $d^*_h$ defined as a piecewise constant interpolation of $d^*_{\bi}=V(\overline{v}^*_{\bi})\, d^m_{\bi}$, $\bi\in \mathcal{I}_N$, will be a solution to \eqref{num:1}.

\subsection{The map $F_m$ is a contraction}
We  now proceed by proving that the mapping $F_m$
is  a contraction.

\begin{lemma}\label{lem:contraction}
The mapping $F_m$ defined by \eqref{eq:Fm} is a contraction in the  $L^2(\Omega)$-norm if $\Dt\leq \kappa h$ for a constant $\kappa$ sufficiently small, that is,
\begin{equation*}
\|F_m(u_{1,h})-F_m(u_{2,h})\|_{L^2(\Omega)}\leq q \|u_{1,h}-u_{2,h}\|_{L^2(\Omega)}
\end{equation*}
for some $q<1$ for any two piecewise constant functions $u_{1,h}$, $u_{2,h}$ on $\Omega$ defined as in \eqref{eq:pwc}. In particular, by Banach's fixed point theorem, this implies that the mapping $F_m$ has a unique fixed point.
\end{lemma}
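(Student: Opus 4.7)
The plan is to exhibit a pointwise Lipschitz estimate for $V(\cdot)$ whose constant scales like $\Delta t$, then combine it with inverse estimates for the discrete Laplacian on piecewise constant functions, exploiting the CFL-type condition $\Delta t=\kappa h$ to absorb the $h^{-2}$ factor from $\Delta_h$.

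First, I would rewrite $V(w)$ as a Cayley transform. Setting $a=\Delta t/2$ and recalling that $Q(w)$ is antisymmetric, the implicit midpoint relation $(I-aQ(w))d^{m+1}=(I+aQ(w))d^m$ gives
\begin{equation*}
V(w)=(I-aQ(w))^{-1}(I+aQ(w)).
\end{equation*}
Because $Q(w)$ has eigenvalues $0,\pm i|w|$, the operator $(I-aQ(w))^{-1}$ has operator norm $\le 1$. A direct resolvent identity then yields
\begin{equation*}
V(w_1)-V(w_2)=2a(I-aQ(w_1))^{-1}Q(w_1-w_2)(I-aQ(w_2))^{-1},
\end{equation*}
whence $\|V(w_1)-V(w_2)\|_{\mathrm{op}}\le \Delta t\,|w_1-w_2|$. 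Applied to $d^m_{\bi}$ with $|d^m_{\bi}|=1$, and to the averaged arguments $\overline u_{1,\bi},\overline u_{2,\bi}$, this gives the pointwise bound
\begin{equation*}
|\tilde d_{1,\bi}-\tilde d_{2,\bi}|\le \tfrac{\Delta t}{2}|u_{1,\bi}-u_{2,\bi}|,\qquad \tilde d_{i,\bi}:=V(\overline u_{i,\bi})d^m_{\bi}.
\end{equation*}

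Next, I decompose the difference using bilinearity of the cross product:
\begin{equation*}
F_m(u_{1,h})_{\bi}-F_m(u_{2,h})_{\bi}=\Delta t\bigl[\Delta_h(\tilde d_1-\tilde d_2)\times\tilde d_1+\Delta_h\tilde d_2\times(\tilde d_1-\tilde d_2)\bigr]_{\bi}.
\end{equation*}
Since $V$ is orthogonal, $|\tilde d_{i,\bi}|=1$ for every $\bi$, and hence $|\Delta_h\tilde d_{2,\bi}|\le C h^{-2}$ pointwise by the definition of $\Delta_h$. For the other term I use the inverse estimate for piecewise constant functions, $\|\Delta_h f\|_{L^2(\Omega)}\le Ch^{-2}\|f\|_{L^2(\Omega)}$, to obtain $\|\Delta_h(\tilde d_1-\tilde d_2)\|_{L^2}\le Ch^{-2}\Delta t\,\|u_{1,h}-u_{2,h}\|_{L^2}$ from the pointwise Lipschitz estimate above.

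Combining these ingredients and squaring,
\begin{equation*}
\|F_m(u_{1,h})-F_m(u_{2,h})\|_{L^2}^2\le C\Delta t^2\bigl(h^{-4}\Delta t^2+h^{-4}\Delta t^2\bigr)\|u_{1,h}-u_{2,h}\|_{L^2}^2=C(\Delta t/h)^4\|u_{1,h}-u_{2,h}\|_{L^2}^2.
\end{equation*}
With $\Delta t=\kappa h$ this reads $\|F_m(u_{1,h})-F_m(u_{2,h})\|_{L^2}\le C\kappa^2\|u_{1,h}-u_{2,h}\|_{L^2}$, which is a contraction once $\kappa$ is small enough that $q:=C\kappa^2<1$. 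Banach's fixed point theorem then delivers a unique fixed point of $F_m$ in $L^2(\Omega)$.

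The main obstacle I expect is the $V$-Lipschitz step: a naive estimate differentiating \eqref{eq:V} directly yields a Lipschitz constant that depends on $|w_1|,|w_2|$ through the denominator $1+\tfrac{\Delta t^2}{4}|w|^2$, which would prevent a clean iteration on functions $u_{i,h}$ for which no a priori pointwise bound is available. The Cayley-transform reformulation is precisely what makes the Lipschitz constant $\Delta t$ alone, uniformly in $w$. Once that is in place, the remaining estimates are straightforward inverse estimates tied together by the CFL condition.
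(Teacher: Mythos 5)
Your proof is correct, and while the overall architecture matches the paper's (a Lipschitz estimate for $V$ combined with inverse estimates, with the CFL condition $\Dt=\kappa h$ absorbing the $h^{-2}$ from $\Delta_h$), the way you obtain the key Lipschitz bound is genuinely different and cleaner. The paper works directly from the explicit rational formula \eqref{eq:V}: it splits $[V(\oxe)-V(\oxz)]d$ into three terms $I$, $II$, $III$ and estimates each by hand, using the denominators $1+\tfrac{\Dt^2}{4}|\overline{u}_j|^2$ to cancel the quadratic and cubic powers of $\overline{u}_j$ in the numerators and arrive at a bound $C\Dt\|u_1-u_2\|_{L^2}$ that is uniform in $u_1,u_2$; this is exactly the ``obstacle'' you identify, and the paper resolves it by bookkeeping rather than by structure. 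Your Cayley-transform representation $V(w)=(I-\tfrac{\Dt}{2}Q(w))^{-1}(I+\tfrac{\Dt}{2}Q(w))$ (which one can check reproduces \eqref{eq:V} exactly, using $Q(w)^2=w\otimes w-|w|^2 I$ and $Q(w)(w\otimes w)=0$), together with the resolvent identity and the skew-symmetry bound $\Vert(I-aQ)^{-1}\Vert_{\mathrm{op}}\le 1$, gives the sharp uniform estimate $\Vert V(w_1)-V(w_2)\Vert_{\mathrm{op}}\le\Dt\,|w_1-w_2|$ in three lines, with a much better constant (the paper's route accumulates a factor of roughly $28$). Your handling of the outer cross-product term also differs slightly -- you split $\Delta_h\tilde d_1\times\tilde d_1-\Delta_h\tilde d_2\times\tilde d_2$ by bilinearity and use the pointwise bound $|\Delta_h\tilde d_2|\le Ch^{-2}$ from $|\tilde d_2|=1$, whereas the paper rewrites $\Delta_h(\cdot)\times(\cdot)$ as $\Div_h[\nabla_h(\cdot)\times(\cdot)]$ and applies the inverse inequality twice -- but both yield the same $h^{-2}$ factor and the same final contraction constant $q=C(\Dt/h)^2=C\kappa^2$. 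The net gain of your approach is brevity and a less restrictive admissible $\kappa$; the paper's explicit computation has the minor virtue of never leaving the componentwise formula it already needed to define the scheme.
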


\begin{proof}
    For the ease of notation, we will omit the indices $\bi$, $m$ and $h$ and write $w$, $d$, $F$, $u_1$, $u_2$ for $w^m_h$, $d^m_h$, $F_m$, $u_{1,h}$, $u_{2,h}$, respectively. Moreover, we denote $y_1:=F(u_1)$ and $y_2:=F(u_2)$ and $\overline{u}_j:=(w+u_j)/2$, $j=1,2$, such that
    \begin{equation*}
    y_j=w+\Dt \Div_h\bigl[\nabla_h \left(V\left(\overline{u}_j\right) d\right)\times V\left(\overline{u}_j\right) d\bigr],\quad j=1,2.
    \end{equation*}
    Then, using the inverse inequality,
    \begin{align}\label{eq:Y}
    \begin{split}
    \|y_1-y_2\|&=\Dt\bigl\|\Div_h\bigl[\nabla_h \left(V(\oxe) d\right)\times V(\oxe)-\nabla_h \left(V(\oxz) d\right)\times V(\oxz) d\bigr]\bigr\|_{L^2(\Om)}\\
    &\leq C \frac{\Dt}{h} \bigl\|\nabla_h \left(V(\oxe) d\right)\times V(\oxe)-\nabla_h \left(V(\oxz) d\right)\times V(\oxz) d\bigr\|_{L^2(\Om)}\\
    &\leq C\frac{\Dt}{h}\Bigl( \bigl\|\nabla_h \left([V(\oxe)-V(\oxz)] d\right)\times V(\oxe)d\bigr\|_{L^2(\Om)}\\
    &\hphantom{\leq C\frac{\Dt}{h}\Bigl(}+\bigl\|\nabla_h \left(V(\oxz) d\right)\times [V(\oxe)-V(\oxz)] d\bigr\|_{L^2(\Om)}\Bigr)\\
    &\leq C\frac{\Dt}{h^2} \bigl\|[V(\oxe)-V(\oxz)] d\bigr\|_{L^2(\Om)}
    \end{split}
    \end{align}
    using that $|V(\overline{u}_j)d|\leq 1$ for the last inequality. We split $\|[V(\oxe)-V(\oxz)] d\|_{L^2(\Om)}$ using \eqref{eq:V},
    \begin{align*}
    &\bigl\|[V(\oxe)-V(\oxz)] d\bigr\|_{L^2(\Om)}\\
    &\qquad \leq \biggl\|\biggl[\frac{1-\frac{\Dt^2}{4}|\oxe|^2}{1+\frac{\Dt^2}{4}|\oxe|^2}
    -\frac{1-\frac{\Dt^2}{4}|\oxz|^2}{1+\frac{\Dt^2}{4}|\oxz|^2}\biggr] d\biggr\|_{L^2(\Om)}\\
    &\qquad \hphantom{\leq}+\biggl\|\biggl[ \frac{\Dt^2}{2+\frac{\Dt^2}{2}|\oxe|^2}(\oxe \otimes \oxe)-\frac{\Dt^2}{2+\frac{\Dt^2}{2}|\oxz|^2}(\oxz \otimes \oxz)\biggr] d\biggr\|_{L^2(\Om)}\\
    &\qquad \hphantom{\leq}+\biggl\|\biggl[\frac{\Dt}{1+\frac{\Dt^2}{4}|\oxe|^2} Q(\oxe)-\frac{\Dt}{1+\frac{\Dt^2}{4}|\oxz|^2} Q(\oxz)\biggr] d\biggr\|_{L^2(\Om)}\\
    &\qquad =:I+II+III.
    \end{align*}
    
    For the $I$ term, we apply the Cauchy-Schwartz inequality to discover that
    \begin{align}\label{eq:A}
    \begin{split}
    I&=\biggl\|\frac{\frac{\Dt^2}{2}\left(|\oxe|^2-|\oxz|^2\right)}{\left(1+\frac{\Dt^2}{4}|\oxe|^2\right)\left(1+\frac{\Dt^2}{4}|\oxz|^2\right)} d\biggr\|_{L^2(\Om)}\\
    &\leq \Dt\biggl\|\frac{1+\frac{\Dt^2}{4}|\oxe|^2+\frac{\Dt^2}{4}|\oxz|^2}{\left(1+\frac{\Dt^2}{4}|\oxe|^2\right)\left(1+\frac{\Dt^2}{4}|\oxz|^2\right)} |\oxe-\oxz| d\biggr\|_{L^2(\Om)}\\
    &\leq \Dt \|u_1-u_2\|_{L^2(\Om)}.
    \end{split}
    \end{align}
    where we have used  $|d| = 1$ to conclude the last inequality.
    
    To bound the $II$ term, we first see that
    \begin{align*}
    II&=\frac{\Dt^2}{2}\biggl\| \frac{\bigl(1+\frac{\Dt^2}{4}|\oxz|^2\bigr)(\oxe \otimes \oxe)-\bigl(1+\frac{\Dt^2}{4}|\oxe|^2\bigr)(\oxz \otimes \oxz)}{\bigl(1+\frac{\Dt^2}{4}|\oxe|^2\bigr)\bigl(1+\frac{\Dt^2}{4}|\oxz|^2\bigr)} \,d\biggr\|_{L^2(\Om)}\\
    &=\frac{\Dt^2}{2}\Biggl(\int\!\! \alpha\sum_{i=1}^3\biggl(      \sum_{j=1}^3 \biggl[\Bigl(1+\frac{\Dt^2}{4}|\oxz|^2\Bigr)\oxe^{(i)} \oxe^{(j)} \\
    & \hphantom{ \frac{\Dt^2}{2}\Biggl(\int\!\! \alpha\sum_{i=1}^3\biggl(      \sum_{j=1}^3 \biggl[\Bigl(} - \Bigl(1+\frac{\Dt^2}{4}|\oxe|^2\Bigr)\oxz^{(i)} \oxz^{(j)}\biggr] d^{(j)}\biggr)^2 dx\Biggr)^{\frac{1}{2}}
    \end{align*}
    where 
    $$
    \alpha=\frac{1}{\bigl(1+\frac{\Dt^2}{4}|\oxe|^2\bigr)\bigl(1+\frac{\Dt^2}{4}|\oxz|^2\bigr)}.
    $$
    Since $|d^{(j)}|\leq 1$, $j=1,2,3$,
    \begin{align*}
    II&\leq \frac{\Dt^2}{2}\Biggl(\int \alpha\sum_{i=1}^3\biggl( \sum_{j=1}^3\bigg| \Bigl(1+\frac{\Dt^2}{4}|\oxz|^2\Bigr)\oxe^{(i)} \oxe^{(j)}\\
    & \hphantom{\leq \frac{\Dt^2}{2}\Biggl(\int \alpha\sum_{i=1}^3\biggl(      \sum_{j=1}^3\bigg| \Bigl(} - \Bigl(1+\frac{\Dt^2}{4}|\oxe|^2\Bigr)\oxz^{(i)} \oxz^{(j)} \bigg|\biggr)^2\, dx\Biggr)^{\frac{1}{2}}\\
    & \leq \frac{3\Dt^2}{2}\sum_{i,j=1}^3 \biggl\|\alpha\biggl(\Bigl(1+\frac{\Dt^2}{4}|\oxz|^2\Bigr)\oxe^{(i)} \oxe^{(j)} - \Bigl(1+\frac{\Dt^2}{4}|\oxe|^2\Bigr)\oxz^{(i)} \oxz^{(j)}\biggr)\biggr\|_{L^2(\Om)}.
    \end{align*}
    We consider one of the summands:
    \begin{align*}
    &\biggl\|\alpha\biggl(\Bigl(1+\frac{\Dt^2}{4}|\oxz|^2\Bigr)\oxe^{(i)} \oxe^{(j)} - \Bigl(1+\frac{\Dt^2}{4}|\oxe|^2\Bigr)\oxz^{(i)} \oxz^{(j)}\biggr)\biggr\|_{L^2(\Om)}\\
    &\leq \Bigl\|\alpha (\oxe^{(i)} \oxe^{(j)}-\oxz^{(i)} \oxz^{(j)})\Bigr\|_{L^2(\Om)}+\frac{\Dt^2}{4}\Bigl\|\alpha\left(|\oxz|^2\oxe^{(i)} \oxe^{(j)}-|\oxe|^2\oxz^{(i)} \oxz^{(j)}\right)\Bigr\|_{L^2(\Om)}\\
    &=:II_1+II_2.
    \end{align*}
    By adding and subtracting, and applying the Cauchy-Schwartz inequality, we deduce the following
    bound for the first term,
    \begin{align}
    II_1&=\frac{1}{\Dt} \bigl\|\alpha (\Dt\,\oxe^{(i)} (u_1^{(j)}-u_2^{(j)})+(u_1^{(i)}-u_2^{(i)}) \Dt\,\oxz^{(j)})\bigr\|_{L^2(\Om)} \nonumber\\
    &\leq \frac{1}{\Dt}\Bigl\{ \bigl\|\alpha \Dt\oxe^{(i)} (u_1^{(j)}-u_2^{(j)})\bigr\|_{L^2(\Om)}+\bigl\|\alpha(u_1^{(i)}-u_2^{(i)}) \Dt\oxz^{(j)}\bigr\|_{L^2(\Om)}\Bigr\}\nonumber\\
    &\leq \frac{1}{2\Dt}\Bigl\{ \bigl\|\alpha (1+\Dt^2(\oxe^{(i)})^2) (u_1^{(j)}-u_2^{(j)})\bigr\|_{L^2(\Om)} \nonumber\\
    &\hphantom{\leq \frac{1}{2\Dt}\Bigl\{ \bigl\|} +\bigl\|\alpha(u_1^{(i)}-u_2^{(i)}) (1+\Dt^2(\oxz^{(j)})^2)\bigr\|_{L^2(\Om)}\Bigr\} \nonumber\\
    &\leq\frac{4}{\Dt} \|u_1-u_2\|_{L^2(\Om)},\label{eq:ii1}
    \end{align}
    where the last inequality follows by inserting the definition of $\alpha$.
    
    Term $II_2$ may be written as
    \begin{equation*}
    II_2=\frac{\Dt^2}{4}\biggl\|\alpha\sum_{k=1}^3\left( (\oxz^{(k)})^2\oxe^{(i)} \oxe^{(j)}-(\oxe^{(k)})^2\oxz^{(i)} \oxz^{(j)}\right)\biggr\|_{L^2(\Om)}.
    \end{equation*}
    We consider one of the terms in the sum. Note that if $i=j$, the term where $i=j=k$ cancels, hence we can assume without loss of generality that $i\neq k$.
    By adding and subtracting, we rewrite one of the terms in $II_2$ as follows
    \begin{equation*}
        \begin{split}
    &(\oxz^{(k)})^2\oxe^{(i)} \oxe^{(j)}-(\oxe^{(k)})^2\oxz^{(i)} \oxz^{(j)} \\
    &\qquad =\oxe^{(i)}\oxe^{(j)}\oxz^{(k)}(u_2^{(k)}-u_1^{(k)})+\oxe^{(k)}\oxz^{(k)}\oxe^{(j)}(u_1^{(i)}-u_2^{(i)}) \\
    &\qquad \qquad +\oxe^{(k)}\oxz^{(k)}\oxz^{(i)}(u_1^{(j)}-u_2^{(j)})+\oxz^{(j)}\oxz^{(i)}\oxe^{(k)}(u_2^{(k)}-u_1^{(k)}).            
        \end{split}
    \end{equation*}
    Next, we apply  Young's inequality to the previous identity giving
    \begin{align*}
    &\big|(\oxz^{(k)})^2\oxe^{(i)} \oxe^{(j)}-(\oxe^{(k)})^2\oxz^{(i)} \oxz^{(j)}\big|\\
    &\quad\leq \frac{1}{\Dt}\left(|\oxe|^2+|\oxz|^2+\frac{\Dt^2}{4}|\oxe|^2|\oxz|^2\right)\\
    &\qquad \qquad\qquad\times\left(|u_1^{(i)}-u_2^{(i)}|+|u_1^{(j)}-u_2^{(j)}|+|u_1^{(k)}-u_2^{(k)}|\right) \\
    &\quad = \frac{4}{(\Delta t)^3}\left(\frac{1}{\alpha}-1\right)
    \times\left(|u_1^{(i)}-u_2^{(i)}|+|u_1^{(j)}-u_2^{(j)}|+|u_1^{(k)}-u_2^{(k)}|\right). 
    \end{align*}
    As a consequence, we conclude that
    \begin{equation}\label{eq:ii2}
        \begin{split}
            II_2&=\frac{\Dt^2}{4}\biggl\|\alpha\sum_{k=1}^3\left( (\oxz^{(k)})^2\oxe^{(i)} \oxe^{(j)}-(\oxe^{(k)})^2\oxz^{(i)} \oxz^{(j)}\right)\biggr\|_{L^2(\Om)} \\
            &\leq \frac{1}{\Dt}\|\oxe - \oxz\|_{L^2(\Om)}.
        \end{split}
    \end{equation}
    From \eqref{eq:ii1} and \eqref{eq:ii2}, we have that
    \begin{equation}\label{eq:B}
        II \leq 25\Dt\|\oxe - \oxz\|_{L^2(\Om)}.
    \end{equation}
    
    The final term $III$ can be bounded as follows
    \begin{align}\label{eq:D}
    \begin{split}
    III&=\Dt\biggl\|\frac{\left(1+\frac{\Dt^2}{4}|\oxz|^2\right) Q(\oxe)-\left(1+\frac{\Dt^2}{4}|\oxe|^2\right) Q(\oxz)}{\left(1+\frac{\Dt^2}{4}|\oxe|^2\right)\left(1+\frac{\Dt^2}{4}|\oxz|^2\right)}\,  d\biggr\|_{L^2(\Om)}\\
    &=\Dt\,\Biggl\|\frac{d\times\Bigl[\bigl(1+\frac{\Dt^2}{4}|\oxz|^2\bigr)\oxe-\bigl(1+\frac{\Dt^2}{4}|\oxe|^2\bigr)\oxz\Bigr]}{\left(1+\frac{\Dt^2}{4}|\oxe|^2\right)\left(1+\frac{\Dt^2}{4}|\oxz|^2\right)}\Biggr\|_{L^2(\Om)}\\
    &\leq  \Dt\,\Biggl\|\frac{u_1-u_2+\frac{\Dt^2}{4}\left(|\oxz|^2\oxe-|\oxe|^2\oxz\right)}{\left(1+\frac{\Dt^2}{4}|\oxe|^2\right)\left(1+\frac{\Dt^2}{4}|\oxz|^2\right)}\Biggr\|_{L^2(\Om)}\\
    &\leq \Dt\,\Biggl\|\frac{\bigl[1+\frac{\Dt^2}{8}\left(|\oxe|^2+|\oxz|^2\right)](u_1-u_2)}{\left(1+\frac{\Dt^2}{4}|\oxe|^2\right)\left(1+\frac{\Dt^2}{4}|\oxz|^2\right)} \\
    &\qquad \qquad\qquad  +\frac{\frac{\Dt^2}{8}\left(|\oxe+\oxz|^2\,|\overline u_1-\overline u_2|\right)}{\left(1+\frac{\Dt^2}{4}|\oxe|^2\right)\left(1+\frac{\Dt^2}{4}|\oxz|^2\right)}\Biggr\|_{L^2(\Om)}\\
    &\leq 2 \Dt \,\|u_1-u_2\|_{L^2(\Om)}.
    \end{split}
    \end{align}
    Summing up \eqref{eq:Y}, \eqref{eq:A}, \eqref{eq:B} and \eqref{eq:D}, we find
    \begin{equation*}
    \|y_1-y_2\|_{L^2(\Om)}\leq C\frac{\Dt^2}{h^2}\|\oxe-\oxz\|_{L^2(\Om)},
    \end{equation*}
    and hence the map $F_m$ is a contraction as long as $\Dt\leq \kappa h$ for a constant $\kappa>0$ small enough.
    This concludes the proof.
\end{proof}

The previous lemma immediately provides the existence of a unique solution to \eqref{num:1}--\eqref{num:2}.
\begin{corollary}\label{cor:unique}
Given a previous time-step $(d_h^m, w_h^m)$, there exists a unique numerical solution $(d_h^{m+1}, w_h^{m+1})$
to the numerical method given in Definition \ref{def:scheme}.
\end{corollary}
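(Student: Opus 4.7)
The plan is to invoke Lemma \ref{lem:contraction} and Banach's fixed point theorem, and then translate the resulting fixed point of $F_m$ back into a solution of the coupled system \eqref{num:1}--\eqref{num:2}. Lemma \ref{lem:contraction} establishes that $F_m$ is a contraction on the finite-dimensional space of piecewise constant functions of the form \eqref{eq:pwc}, equipped with the $L^2(\Omega)$ norm, which is trivially a complete metric space; Banach therefore yields a unique fixed point $w^*_h$ of $F_m$.

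Given $w^*_h$, I would define the next time step by setting $w_h^{m+1} := w^*_h$ and prescribing $d_h^{m+1}$ gridpoint-wise through $d^{m+1}_{\bi} := V(\bar w^*_{\bi})\, d^m_{\bi}$, with $\bar w^*_{\bi} = (w^m_{\bi} + w^*_{\bi})/2$, exactly as in \eqref{eq:weissnoed}. The first equation \eqref{num:1} then holds by construction: the matrix $V$ was built so that $d^{m+1} = V(w^{m+1/2}) d^m$ is equivalent to $(I - \frac{\Delta t}{2} Q(w^{m+1/2}))\, d^{m+1} = (I + \frac{\Delta t}{2} Q(w^{m+1/2}))\, d^m$, which is nothing but \eqref{num:1} rewritten using $d \times w = Q(w) d$. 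The second equation \eqref{num:2} is precisely the fixed-point identity $w^*_h = F_m(w^*_h)$, after identifying $V(\bar w^*_{\bi}) d^m_{\bi}$ with $d^{m+1}_{\bi}$ and passing to the midpoint $d^{m+1/2}_{\bi}$ inside the expression defining $F_m$.

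For uniqueness, suppose $(\tilde d_h^{m+1}, \tilde w_h^{m+1})$ is any second solution of \eqref{num:1}--\eqref{num:2}. Equation \eqref{num:1} at each grid point $\bi$ is a $3 \times 3$ linear system for $\tilde d^{m+1}_{\bi}$ whose implicit matrix $I - \frac{\Delta t}{2} Q(\tilde w^{m+1/2}_{\bi})$ is non-singular, since $Q$ is skew-symmetric, hence necessarily $\tilde d^{m+1}_{\bi} = V(\tilde w^{m+1/2}_{\bi})\, d^m_{\bi}$. Substituting this expression into \eqref{num:2} produces exactly the fixed-point equation $\tilde w_h^{m+1} = F_m(\tilde w_h^{m+1})$, and the uniqueness clause of Lemma \ref{lem:contraction} forces $\tilde w_h^{m+1} = w^*_h$ and consequently $\tilde d_h^{m+1} = d_h^{m+1}$.

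The heavy lifting has already been carried out in Lemma \ref{lem:contraction}; the only remaining conceptual point, which I do not expect to pose any genuine obstacle, is the bijective correspondence between solutions of the nonlinear system \eqref{num:1}--\eqref{num:2} and fixed points of the map $F_m$. This correspondence rests entirely on the invertibility of the Cayley-type matrix $I - \frac{\Delta t}{2} Q(w)$ for every $w \in \mathbb{R}^3$, together with the straightforward unpacking of the definition of $F_m$ so that its fixed-point condition reproduces \eqref{num:2}.
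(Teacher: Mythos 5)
Your proposal is correct and follows essentially the same route as the paper, which states the corollary as an immediate consequence of Lemma \ref{lem:contraction} (Banach's fixed point theorem applied to the contraction $F_m$) together with the correspondence between fixed points of $F_m$ and solutions of \eqref{num:1}--\eqref{num:2} asserted just after Definition \ref{def:Fm}. You merely make explicit the details the paper leaves tacit: the invertibility of $I-\tfrac{\Dt}{2}Q(w)$ underlying \eqref{eq:weissnoed}, and the reduction of any solution of the coupled system to a fixed point of $F_m$, which is exactly the intended reading (noting, as you do, that the expression in \eqref{eq:Fm} is to be understood with the midpoint $d^{m+1/2}_{\bi}$, consistently with the iteration \eqref{num:fixed}).
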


\subsection{Proof of Theorem \ref{thm:it}}

Using the previous lemma, we can now prove that 
the fixed point iteration in Definition \ref{def:fixed}
will converge to the correct solution. 

Theorem \ref{thm:it}  is an immediate consequence of the following
lemma.

\begin{lemma}\label{lem:itt}
Given any $\epsilon_0>0$, 
there is a number of iterations $\bar s \in \mathbb{N}_+$ in Definition \ref{def:fixed} with $\bar s \leq C|\log {\epsilon}_0|$ 
such that \eqref{eq:stop} holds with $\epsilon=\epsilon_0$ and
\begin{equation}\label{eq:iterr}
\|w_h^{m+1}-w_h^{m+1,\bar s}\|_{L^2(\Omega)}+\|\Grad d_h^{m+1}-\Grad d^{m+1, \bar s}_h\|_{L^2(\Omega)}\leq \epsilon_0.
\end{equation}
\end{lemma}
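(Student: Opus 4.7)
The plan is to exploit the contraction property of $F_m$ established in Lemma \ref{lem:contraction} and to convert the resulting geometric convergence of the fixed-point iteration into the required logarithmic bound on $\bar s$.

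First, I would identify the iteration of Definition \ref{def:fixed} with successive application of $F_m$: given $u = w_h^{m,s}$, the first equation in \eqref{num:fixed} can be solved explicitly to give $d_h^{m,s+1} = V(\overline{u})d_h^m$ with $\overline{u} = (w_h^m + u)/2$, so the second equation yields $w_h^{m,s+1} = F_m(w_h^{m,s})$. By Lemma \ref{lem:contraction} there exists $q<1$ (depending on $\kappa$ but not on $h$) such that, since $w_h^{m+1}$ is the unique fixed point of $F_m$,
\begin{equation*}
\|w_h^{m,s} - w_h^{m+1}\|_{L^2(\Om)} \leq q^{s}\,\|w_h^{m} - w_h^{m+1}\|_{L^2(\Om)}
\end{equation*}
for every $s\geq 0$.

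Next, I would bound the initial error $\|w_h^m - w_h^{m+1}\|_{L^2(\Om)}$ uniformly in $h$. Using \eqref{num:2}, the constraint $|d_h^{m+1/2}|\leq 1$, the inverse inequality $\|\Delta_h v\|_{L^2(\Om)} \leq C h^{-1}\|\Grad_h v\|_{L^2(\Om)}$, the energy identity \eqref{const:energy} of Lemma \ref{lem:constraint}, and $\Dt = \kappa h$, one obtains
\begin{equation*}
\|w_h^{m+1} - w_h^m\|_{L^2(\Om)} \leq \Dt\,\|\Delta_h d_h^{m+1/2}\|_{L^2(\Om)} \leq C\kappa \sqrt{E_0} =: M,
\end{equation*}
independently of $h$ and $m$. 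Combining with the previous display gives $\|w_h^{m,s} - w_h^{m+1}\|_{L^2(\Om)} \leq q^{s} M$.

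To control the gradient difference, observe that $d_h^{m,s} - d_h^{m+1} = [V(\overline{u}^{\,s-1}) - V(\overline{v}^{\,*})]d_h^m$, where $\overline{v}^{\,*}$ corresponds to the fixed point. Using the inverse inequality together with the bound $\|[V(\overline{u}_1) - V(\overline{u}_2)]d\|_{L^2(\Om)} \leq C\Dt\,\|u_1-u_2\|_{L^2(\Om)}$ obtained by summing \eqref{eq:A}, \eqref{eq:B}, and \eqref{eq:D} inside the proof of Lemma \ref{lem:contraction}, one has
\begin{equation*}
\|\Grad_h(d_h^{m,s} - d_h^{m+1})\|_{L^2(\Om)} \leq \tfrac{C}{h}\|d_h^{m,s} - d_h^{m+1}\|_{L^2(\Om)} \leq C\kappa\,\|w_h^{m,s-1} - w_h^{m+1}\|_{L^2(\Om)} \leq C\kappa q^{s-1} M.
\end{equation*}
Hence choosing $\bar{s} := \lceil \log(4(1+C\kappa)M/\epsilon_0)/|\log q|\rceil = O(|\log \epsilon_0|)$ immediately yields \eqref{eq:iterr}, and the stopping criterion \eqref{eq:stop} follows via the triangle inequality applied to two consecutive iterates (after possibly enlarging $\bar s$ by an additive $O(1)$), since $\|w_h^{m,\bar s}- w_h^{m,\bar s-1}\|_{L^2(\Om)} \leq \|w_h^{m,\bar s}-w_h^{m+1}\|_{L^2(\Om)} + \|w_h^{m+1}-w_h^{m,\bar s-1}\|_{L^2(\Om)}$, and similarly for the gradient term.

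There is no genuine obstacle once Lemma \ref{lem:contraction} is in hand; the only delicate point is making sure no constant depends on $h$ or $m$. This is where the scaling $\Dt = \kappa h$ is essential: it precisely compensates the $h^{-1}$ factor produced by the inverse inequality, both in the uniform bound on the initial error $M$ and in the passage from $w$-differences to $\Grad d$-differences.
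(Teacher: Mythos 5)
Your proposal is correct and follows essentially the same route as the paper: geometric decay from the contraction property of $F_m$, the inverse inequality combined with the Lipschitz estimate on $V$ (from \eqref{eq:A}, \eqref{eq:B}, \eqref{eq:D}) to transfer the $w$-error to the $\Grad_h d$-error, and a triangle inequality for the stopping criterion. The only (immaterial) difference is that you bound the initial error $\|w_h^{m+1}-w_h^m\|_{L^2(\Om)}$ via the scheme equation, the inverse inequality and the CFL condition, whereas the paper simply invokes the energy bound on $\|w_h^{m+1}\|_{L^2(\Om)}+\|w_h^m\|_{L^2(\Om)}$; both yield an $h$- and $m$-independent constant.
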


\begin{proof}
We again omit writing the indices $h$ and $\bi$ and denote $w^{m+1,0}:=w^m$, $w^{m+1,s}:=F_m(w^{m+1,s-1})$ for $s\geq 1$. Now, since $F_m$ is a contraction with `Lipschitz' constant $q<1$ and $F_m(w^{m+1})=w^{m+1}$,
\begin{align*}
\|w^{m+1}-w^{m+1,s}\|_{L^2(\Om)}&=\|F(w^{m+1})-F(w^{m+1,s-1})\|_{L^2(\Om)} \\
&\leq q\|w^{m+1}-w^{m+1,s-1}\|_{L^2(\Om)} \\
&\leq q^s \|w^{m+1}-w^{m}\|_{L^2(\Om)}.
\end{align*}
Thus, it follows from the energy estimate, 
\begin{equation}\label{eq:kenplan}
\|w^{m+1}-w^{m+1,s}\|_{L^2}\leq 2 q^{s} E_0.
\end{equation}
Moreover, we note that it follows by the inverse inequality, \eqref{eq:weissnoed}, \eqref{eq:V} and \eqref{eq:A}, \eqref{eq:B} and \eqref{eq:D},
\begin{align}\label{eq:toll2}
&\|\nabla_hd^{m+1, s}_h-\nabla_h d^{m+1}_h\|_{L^2(\Om)} \notag\\
&\qquad =\bigl\|\nabla_h \bigl[V((w^{m}_h+w_h^{m+1,s})/2)-V(w_h^{m+1/2})\bigr]d_h^{m}\bigr\|_{L^2(\Om)}\notag\\
&\qquad \leq \frac{C}{h}\Bigl\|\left(V((w^{m}_h+w_h^{m+1,s})/2)-V(w_h^{m+1/2})\right)d_h^{m}\Bigr\|_{L^2(\Om)}\notag\\
&\qquad \leq \frac{C\Dt}{h} \|w^{m+1,s}_h-w^{m+1}_h\|_{L^2(\Om)}
\leq 2\, C  q^{s} E_0,
\end{align}
where the last inequality follows from the {\sc cfl} condition and \eqref{eq:kenplan}. Hence, by using the triangle inequality,
\begin{align*}
&\|w^{m+1,s+1}-w^{m+1,s}\|_{L^2(\Om)}\\
&\qquad\leq \|w^{m+1,s+1}-w^{m+1}\|_{L^2(\Om)}+ \|w^{m+1}-w^{m+1,s}\|_{L^2(\Om)}
 \leq 4 \,q^{s} E_0,
\end{align*}
and therefore also
\begin{equation*}
\|\nabla_h d^{m+1,s+1}-\nabla_h d^{m+1,s}\|_{L^2(\Om)} \leq 4\, C\,q^{s} E_0,
\end{equation*}
which implies that the fixed point iteration converges. That is, 
the stopping criteria \eqref{eq:stop} is met once $s$ is high enough to satisfy
\begin{equation*}
    4 (C+1)\,q^{s} E_0 < \eps_0 \quad \Rightarrow \quad s > \frac{\log \left(\frac{4(C+1)E_0}{\eps_0}\right)}{\log \left(\frac{1}{q}\right)}.
\end{equation*}
From \eqref{eq:kenplan} and \eqref{eq:toll2}, it is clear that this $s$ also satisfies
$$
\left\|w^{m+1}-w^{m+1,s}\right\|_{L^2(\Om)}+\left\|\nabla_hd^{m+1, s}_h-\nabla_h d^{m+1}_h\right\|_{L^2(\Om)}< \eps_0.
$$
This concludes the proof.
\end{proof}

\section{Numerical results}

In this final section, we shall report on some numerical experiments 
with the new angular momentum method. 
We shall consider two cases. In the first case, we will explore the rate 
of convergence of the method. In the second case, we will check whether 
the method predicts blow-up of the gradient for initial data where this is 
known to be the case. 

\subsection{Convergence test}
It is a non-trivial task to find analytical solutions of the wave map equation \eqref{eq:3Dwave} in $3D$. In $2D$ however, the 
dynamics of the wave map equation may be totally described by the linear wave equation. Specifically, 
upon introducing an angle $\vt(t,x)$ and writing 
\begin{equation*}
    d(t,x) = \begin{pmatrix}
        \cos \vt \\ \sin \vt,
    \end{pmatrix},
\end{equation*}
one easily derives that $\vt$ evolves according to the linear wave equation
\begin{equation}\label{eq:lw}
    \vt_{tt} - \Delta \vt = 0.
\end{equation}
Hence, in the $2D$ case, we can compute analytical solutions using d'Alembert's formula. 
\begin{figure}[ht] %
  \centering
  \begin{tabular}{lr}
    \includegraphics[width=0.8\textwidth]{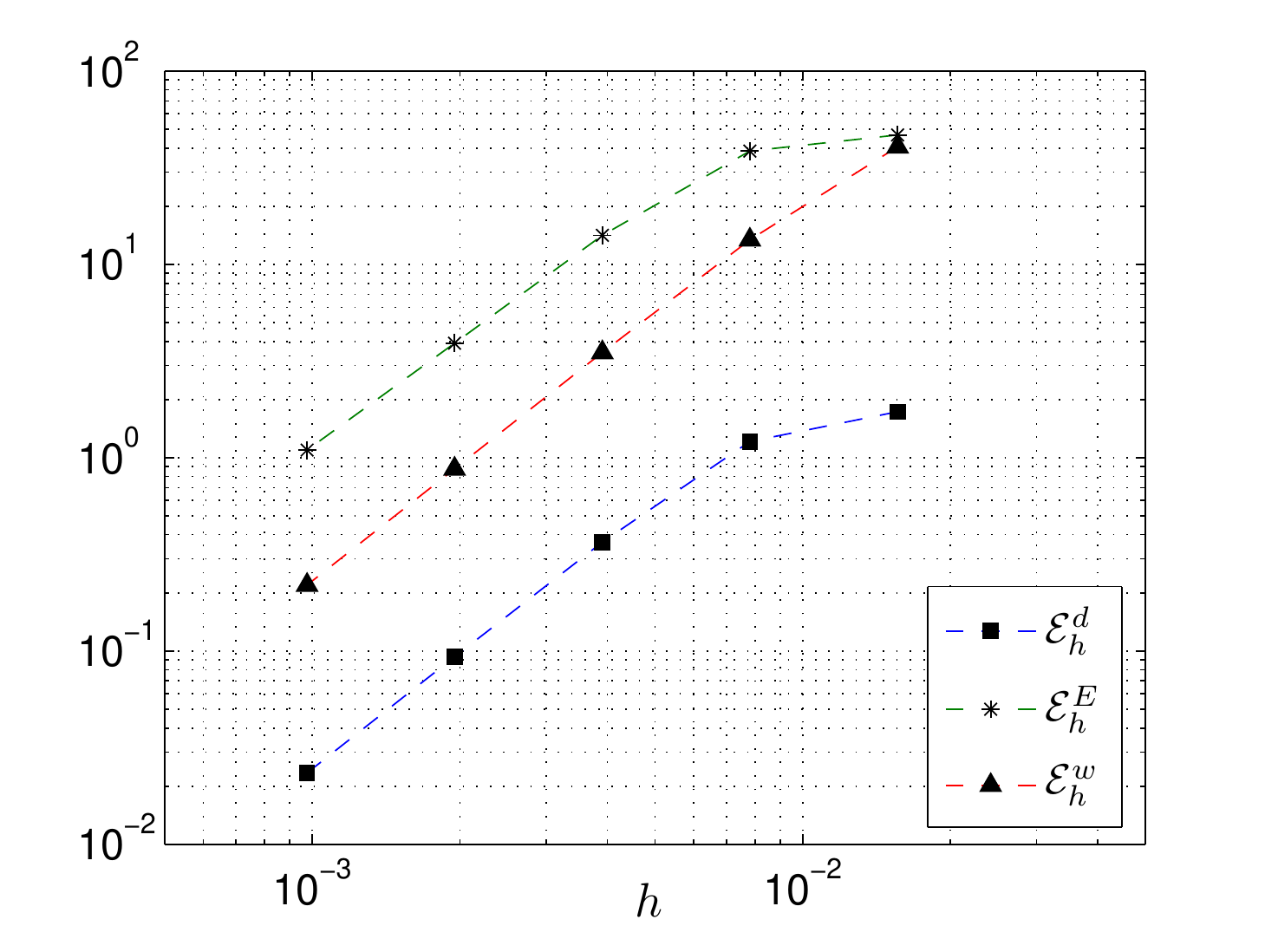}
  \end{tabular}
  \caption{The errors $\mathcal{E}^d_h$, $\mathcal{E}^w_h$ and $\mathcal{E}^E_h$ for the approximations to \eqref{eq:3Dwave} for a solution of the form \eqref{eq:dalembert} at time $T=20$ for $h=2^{-j}$, $j=6,\dots, 10$.}
  \label{fig:conv}
\end{figure}
In particular, \eqref{eq:lw} has solutions of the form
\begin{align}\label{eq:dalembert}
\begin{split}
&\vt(t,x,y)\\
&\quad =\sum_{j=-J}^J \bigl\{a_j^+ \sin(2\pi j (\sqrt{2} t +(x+y)))+ a_j^- \sin(2\pi j (\sqrt{2} t -(x+y)))\\
&\quad\hphantom{=\sum_{j=-J}^J \bigl\{}+ b_j^+ \cos(2\pi j (\sqrt{2} t +(x+y)))
+ b_j^- \cos(2\pi j (\sqrt{2} t -(x+y)))\\
&\quad \hphantom{=\sum_{j=-J}^J \bigl\{}+ c_j^+ \sin(2\pi j (\sqrt{2} t +(x-y)))+ c_j^- \sin(2\pi j (\sqrt{2} t -(x-y)))\\
&\quad\hphantom{=\sum_{j=-J}^J \bigl\{}+d_j^+ \cos(2\pi j (\sqrt{2} t +(x-y)))+ d_j^- \cos(2\pi j (\sqrt{2} t -(x-y)))\bigr\},
\end{split}
\end{align}
for $J\geq 0$. 
\begin{figure}[ht] %
  \centering
  \begin{tabular}{lr}
    \includegraphics[width=0.5\textwidth]{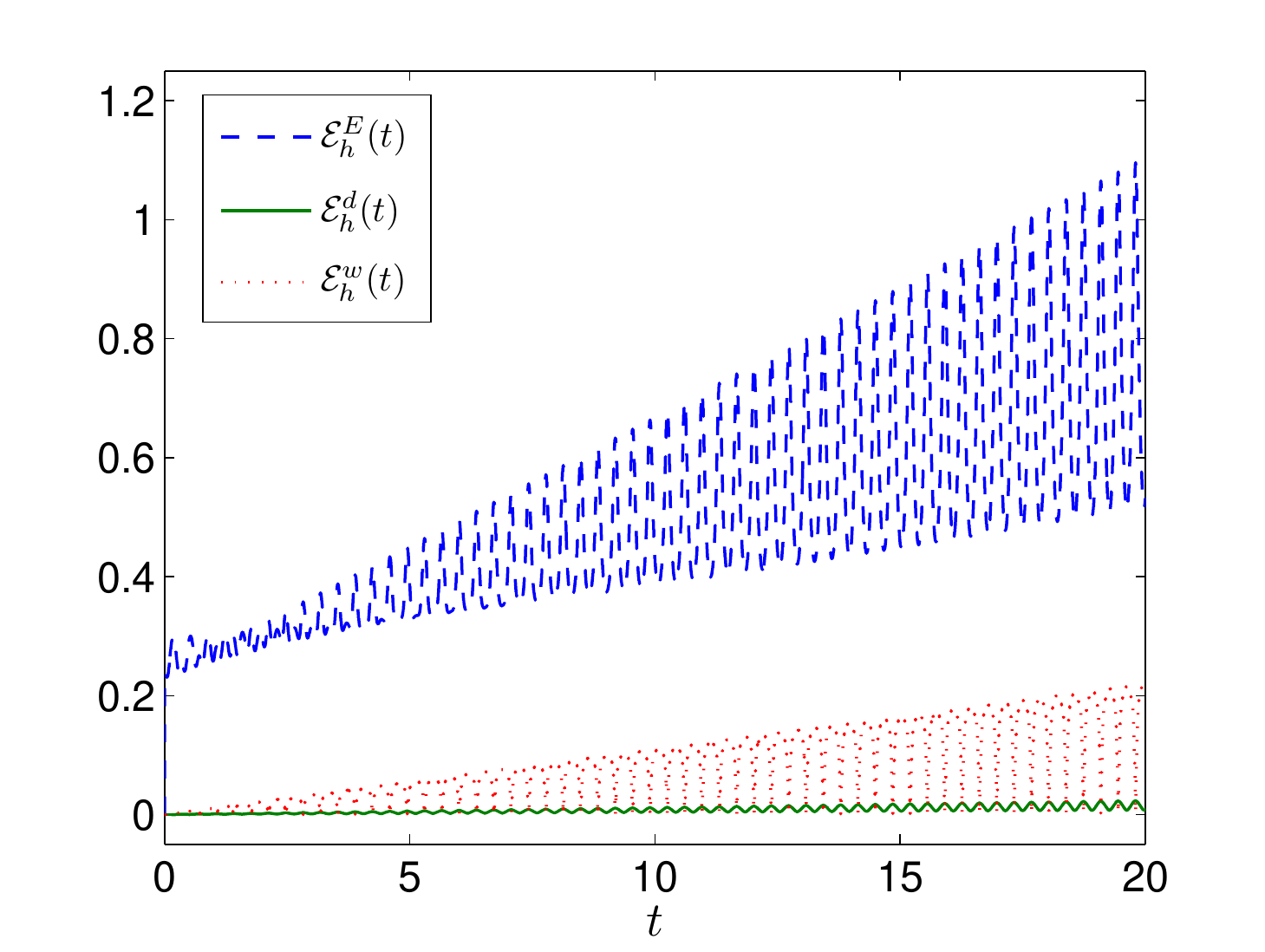}
    \includegraphics[width=0.5\textwidth]{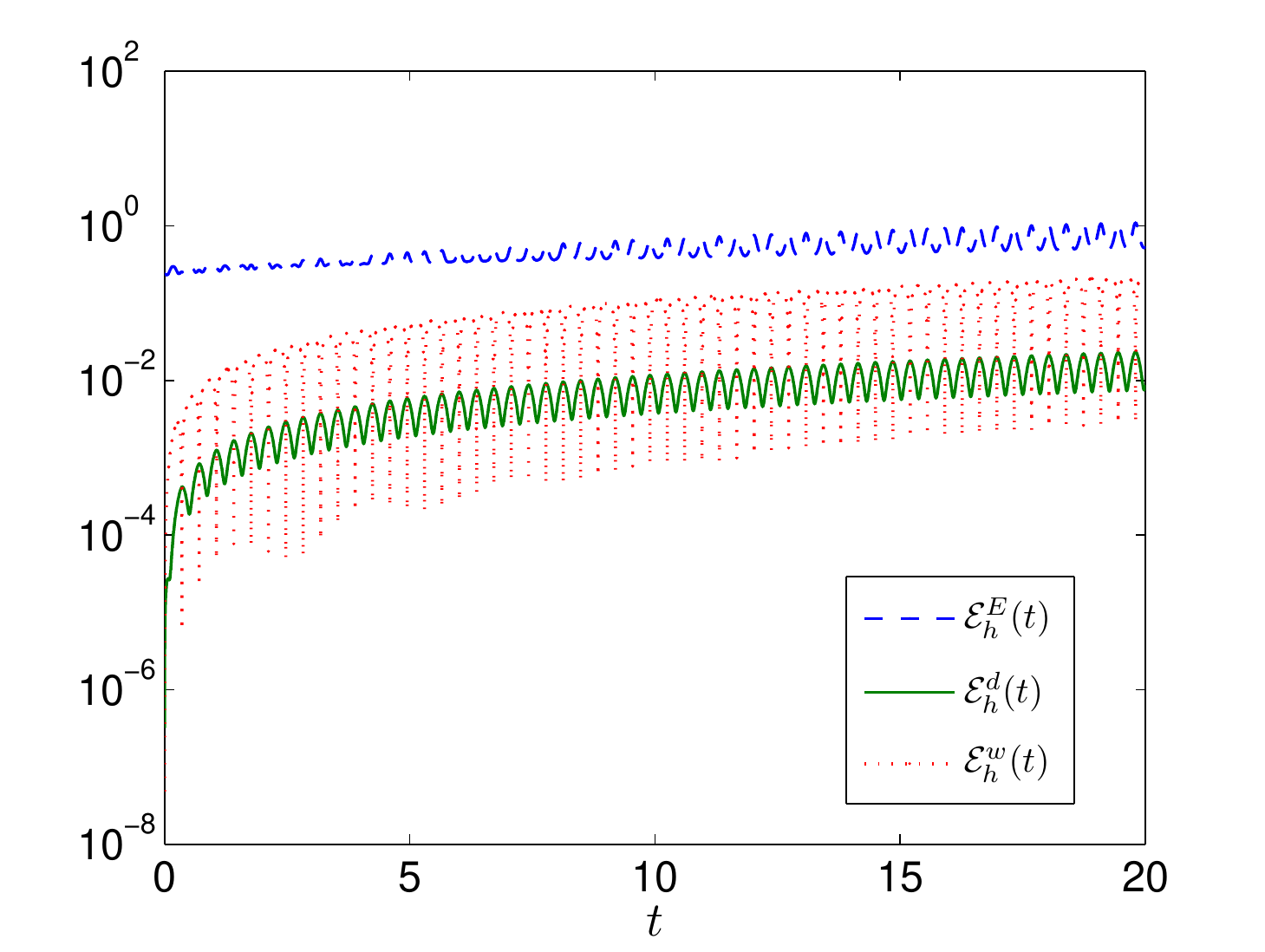}
  \end{tabular}
  \caption{The evolution of the errors $\mathcal{E}^d_h(t)$, $\mathcal{E}^w_h(t)$ and $\mathcal{E}^E_h(t)$ versus time for the approximations to \eqref{eq:3Dwave} for a solution of the form \eqref{eq:dalembert} for $h=2^{-10}$. Left: Real error, right: Error in log-scale.}
  \label{fig:errs}
\end{figure}
In Figure \ref{fig:conv}, we have plotted the errors between the approximations $d_h$ and $d$, $w_h$ and $w$ respectively, in the $L^2$-norm and in the energy norm, that is
\begin{align*}
\mathcal{E}^d_h&:=\sup_m \| d(t^m,\cdot)-d^m_h\|_{L^2(\Omega)}\\
\mathcal{E}^w_h&:=\sup_m \| w(t^m,\cdot)-w^m_h\|_{L^2(\Omega)}\\
\mathcal{E}^E_h&:=\sup_m \sqrt{\| \nabla d(t^m,\cdot)-\Grad_h d^m_h\|_{L^2(\Omega)}^2+\| d_t(t^m,\cdot)-D_t d^m_h\|_{L^2(\Omega)}^2}
\end{align*}
for $a_1^+=a_1^-=1/4$, $a_2^+=a_2^-=1/10$, $b_1^+=-b_1^-=-2$, $b_2^+=-b_2^-=1/100$, $T=20$, $\Dt=0.5 h$, and $\Omega=\mathbb{T}^2$. 
Moreover,  $h_j=2^{-j}$, $j=6,\dots,10 $ for tolerance $\epsilon_0=h^2$. We observe a rate of convergence of almost $2$ for $\mathcal{E}^d_h$ and $\mathcal{E}^w_h$ and about $1.8$ for $\mathcal{E}^E_h$ (Table \ref{tab:errs}). Other choices of $\epsilon_0$ such as $h^{3/2}$ or $h^3$ gave similar results. 
\begin{table}[h]
  \centering
  \begin{tabular}[h]{|c|c|c|c|}
    \hline
    h & $\mathcal{E}_h^d$ & $\mathcal{E}_h^E$ & $\mathcal{E}_h^w$ \\
    \hline
   
    $2^{-6}$ & 1.731 & 46.78   & 40.58\\
    $2^{-7}$  & 1.213 & 38.64 & 13.42\\
    $2^{-8}$ & 0.366  & 14.15  & 3.499\\
    $2^{-9}$  & 0.093  & 3.915  & 0.877\\
    $2^{-10}$ &0.023  &  1.096  & 0.219\\
	\hline
	Rate & $1.56$ & $1.35$ & $1.88$ \\
   
    \hline   
  \end{tabular}
   \vspace{1em}
  \caption{Errors for different mesh resolutions for \eqref{eq:3Dwave}, \eqref{eq:dalembert} at time  $T=20$, $\Dt =0.5 h$ and average rate for grid sizes $2^{-6}$ to $2^{-10}$.} 
  \label{tab:errs}
\end{table}
In Figure \ref{fig:errs}, the evolution of the errors $\mathcal{E}^\alpha_h(t)$, $\alpha\in\{d,w,E\}$ , where $\mathcal{E}^d_h(t):=\| d(t,\cdot)-d_h(t,\cdot)\|_{L^2(\Omega)}$, and the other two defined in a similar way, for $h=2^{-10}$ versus time is shown. It appears that after an initial exponential increase, the error increases linearly with time.
\subsection{Initial data developing singularities}
In our second experiment, we compare the approximations computed by \eqref{num:1}--\eqref{num:2} to those obtained with the algorithms from \cite{BFP}.
\begin{figure}[ht] %
  \centering
  \begin{tabular}{lr}
    \includegraphics[width=0.9\textwidth]{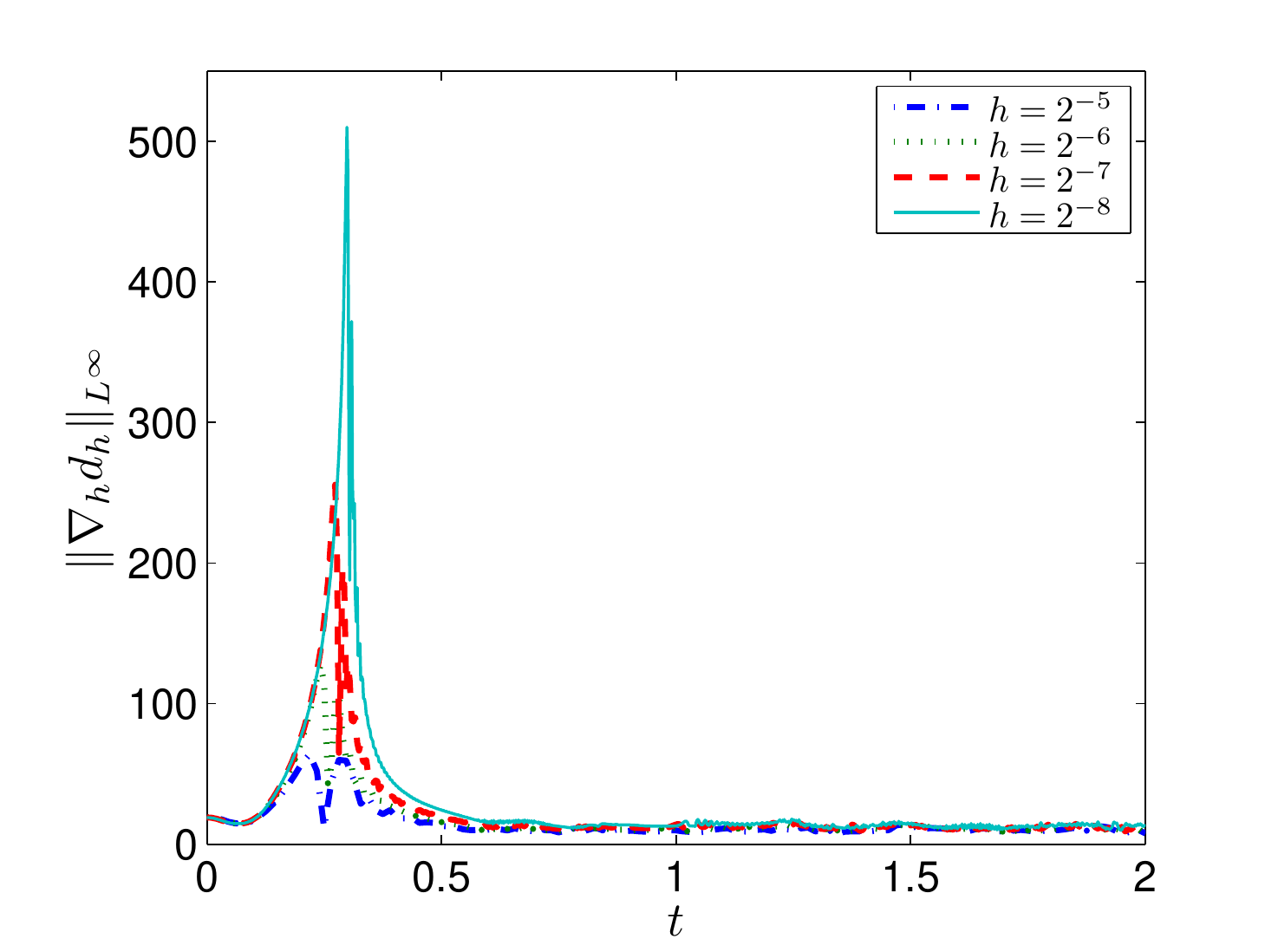}
  \end{tabular}
  \caption{The evolution of $\|\nabla_h d_h\|_{L^{\infty}(\Omega)}$ versus time for initial data \eqref{eq:sb1}.}
  \label{fig:maxgrad}
\end{figure}
Specifically, we compute approximations for the initial data, 
\begin{equation}\label{eq:sb1}
d^0(x,y)=\begin{cases} (0,0,-1)^T, & r\geq 1/2,\\
	(2 x a, 2 y a,a^2-r^2)^T/(a^2+r^2), & r<1/2,
\end{cases}, \quad w^0(x,y)\equiv 0,
\end{equation}
on $D=[-0.5,0.5]^2$, where $r:=\sqrt{x^2+y^2}$ and $a(r)=(1-2 r)^4$ up to time $T=2$, with CFL-condition $\Dt=0.5 h$, $h=2^{-j}$ for $j=5,6,7,8$ and tolerance $\epsilon_0=h^2$. As in \cite{BFP}, we observe a blow-up of the gradient $\nabla d$ in the $L^\infty$-norm around time $T=0.3$, cf. Figure \ref{fig:maxgrad}.
\begin{figure}[ht] %
  \centering
  \begin{tabular}{lr}
    \includegraphics[width=0.5\textwidth]{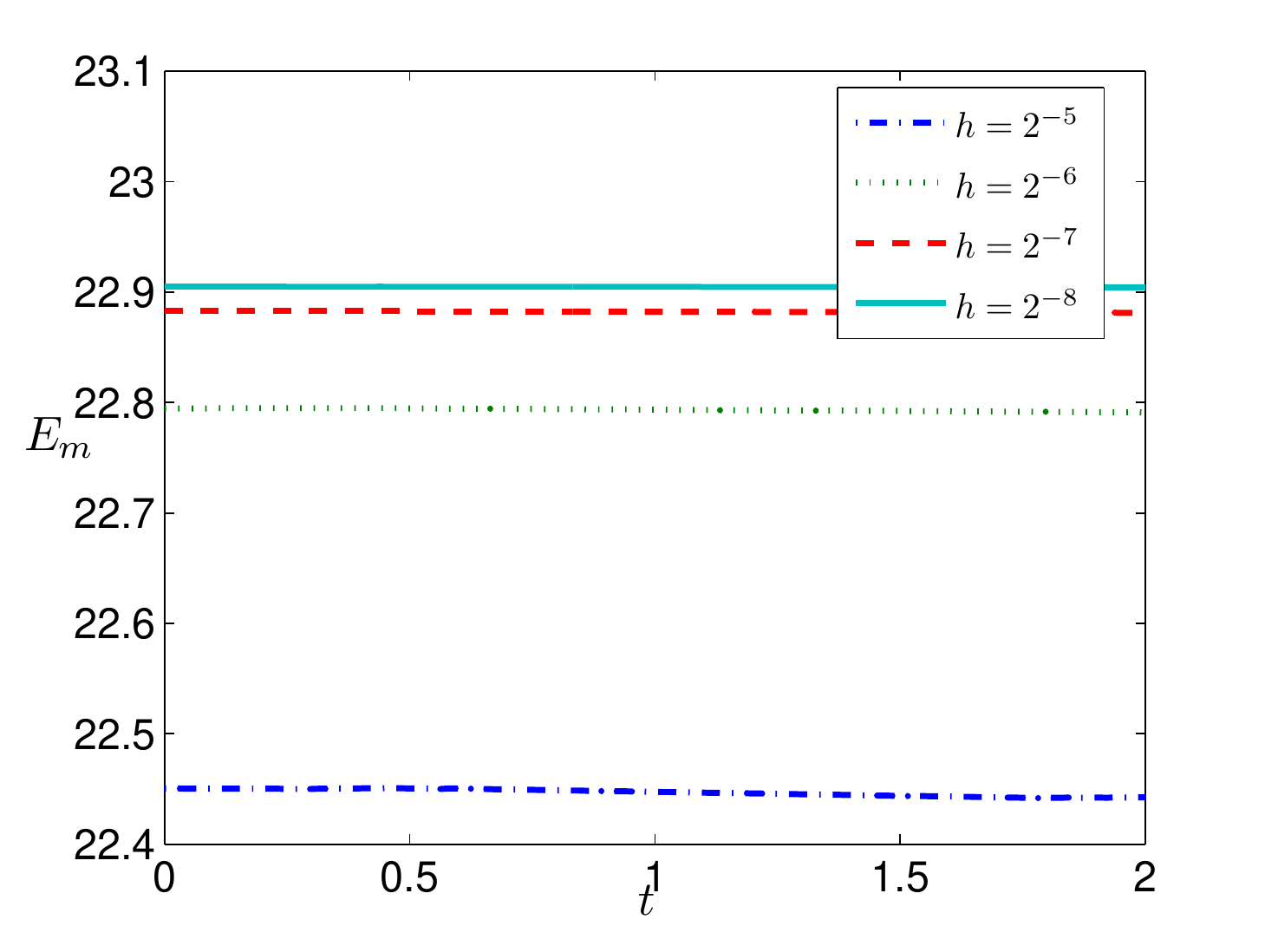}
    \includegraphics[width=0.5\textwidth]{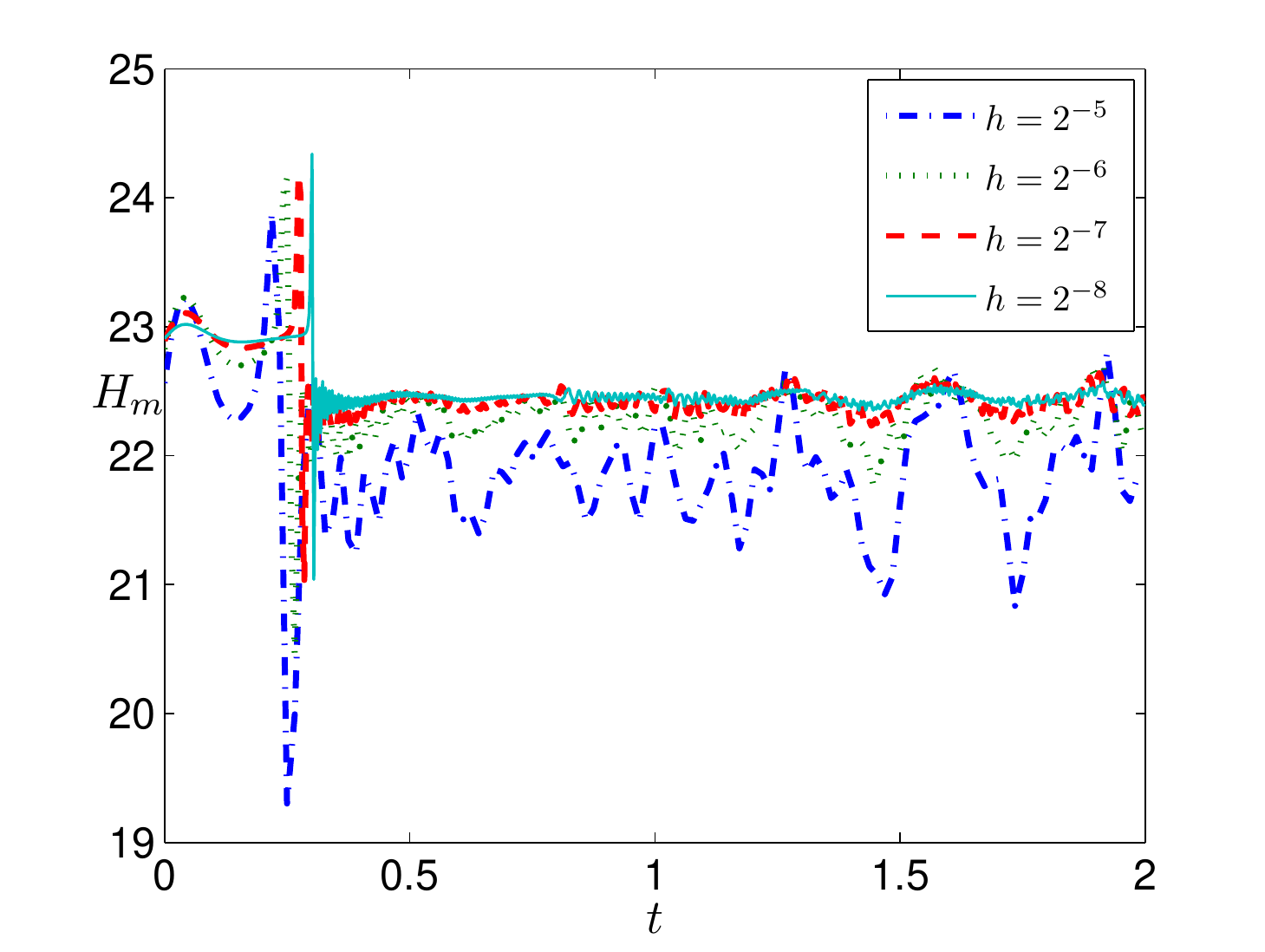}
  \end{tabular}
  \caption{The evolution of $E_m$ and $H_m$ versus time for initial data \eqref{eq:sb1}.}
  \label{fig:EHm}
\end{figure}
 The approximation ${E}_m^{\overline{s}}$ of the discrete energy \eqref{eq:defnrgy} is close to being preserved, as we see in Figure \ref{fig:EHm}, left hand side. In the same figure, on the right hand side, we have plotted the quantity
\begin{equation*}
H_m:=\frac{1}{2}\int_\Omega |D_t d^m_h|^2+|\Grad_h d^m_h|^2\, dx,
\end{equation*}
which is not conserved by our scheme, but upper bounded. We observe some larger oscillations around the time of blow-up of $\Grad_h d_h$ in the $L^\infty$-norm.
\begin{figure}[ht] %
  \centering
  \begin{tabular}{lr}
    \includegraphics[width=0.5\textwidth]{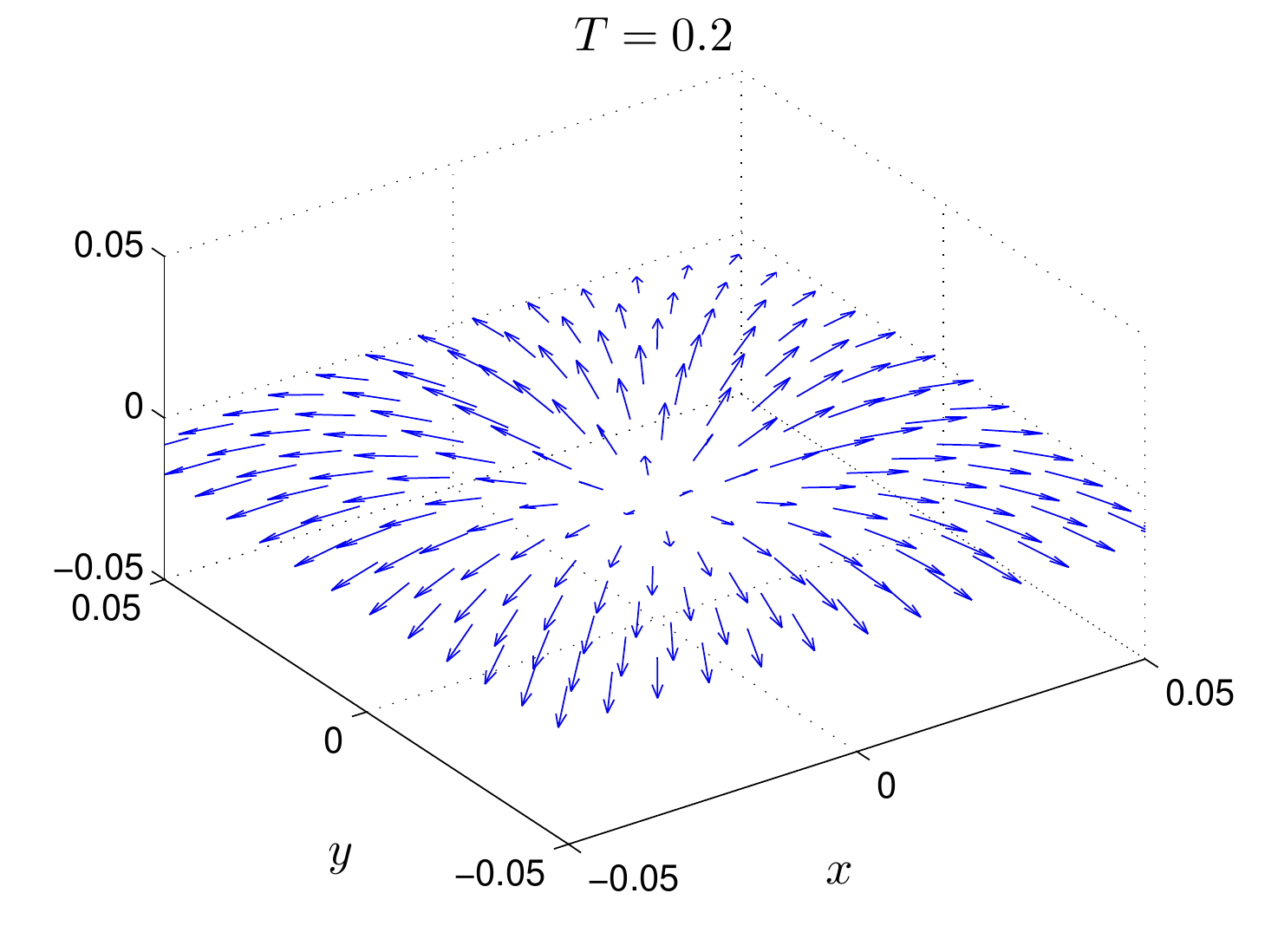}
    \includegraphics[width=0.5\textwidth]{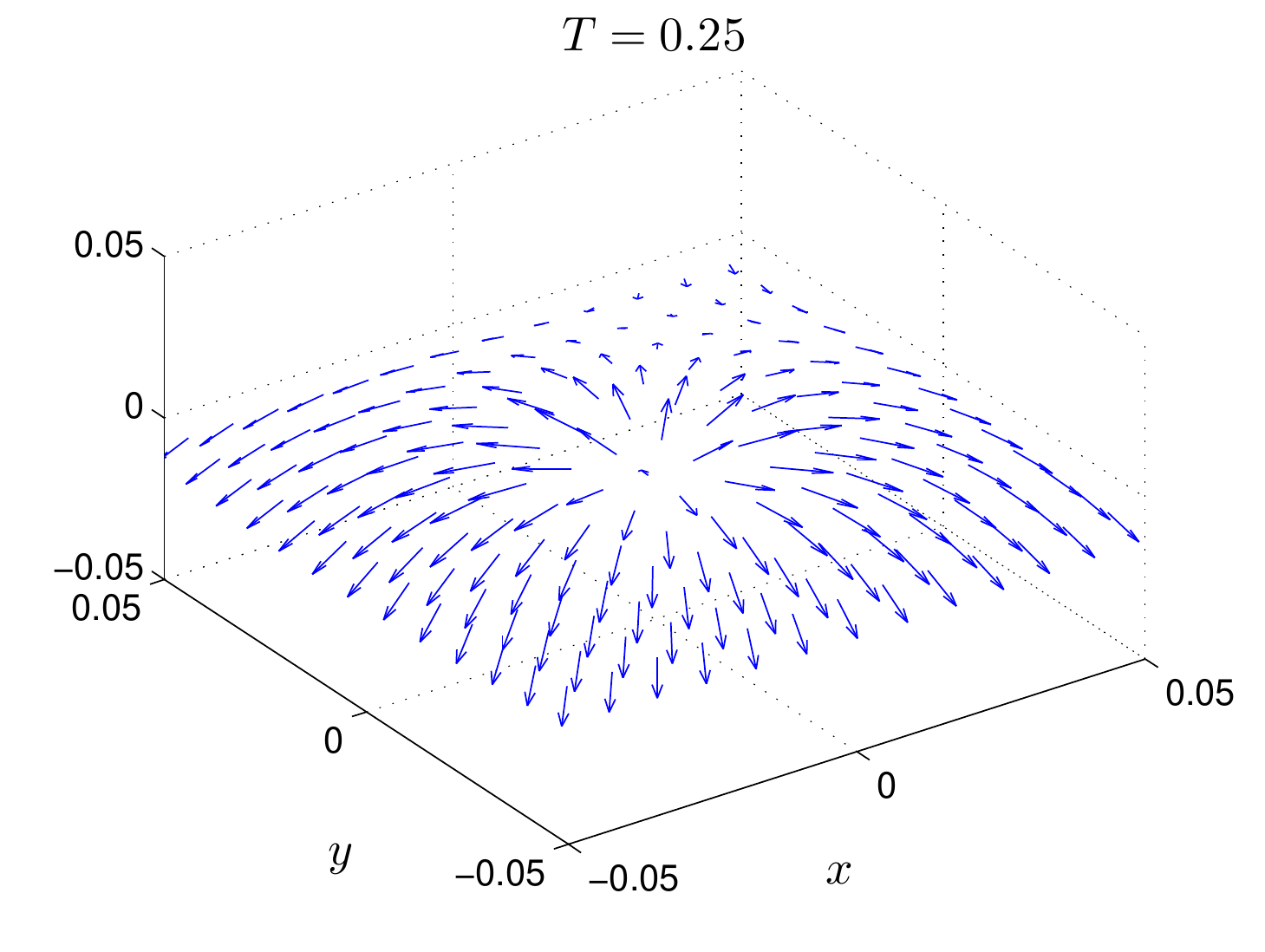}\\
\includegraphics[width=0.5\textwidth]{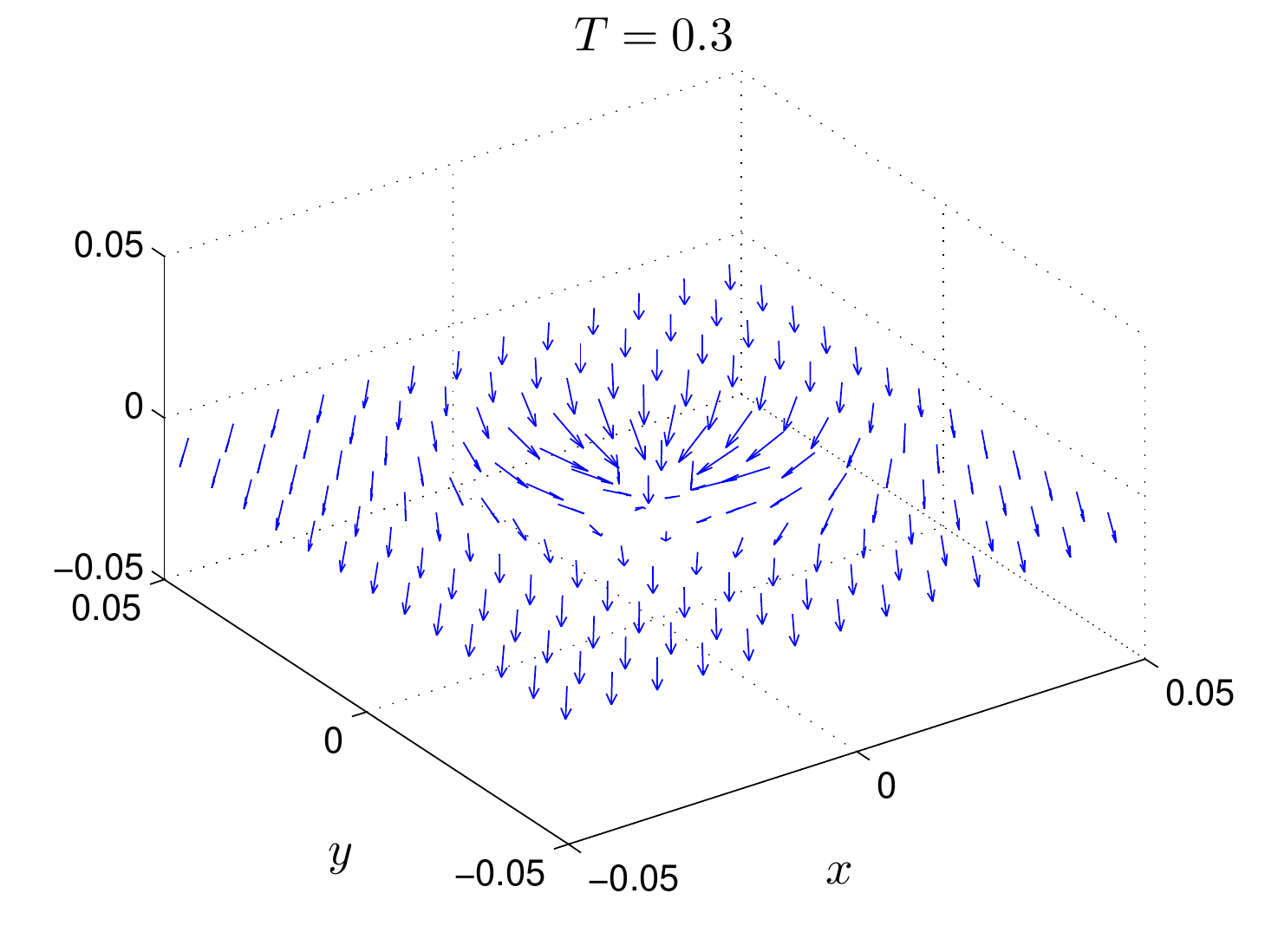}
    \includegraphics[width=0.5\textwidth]{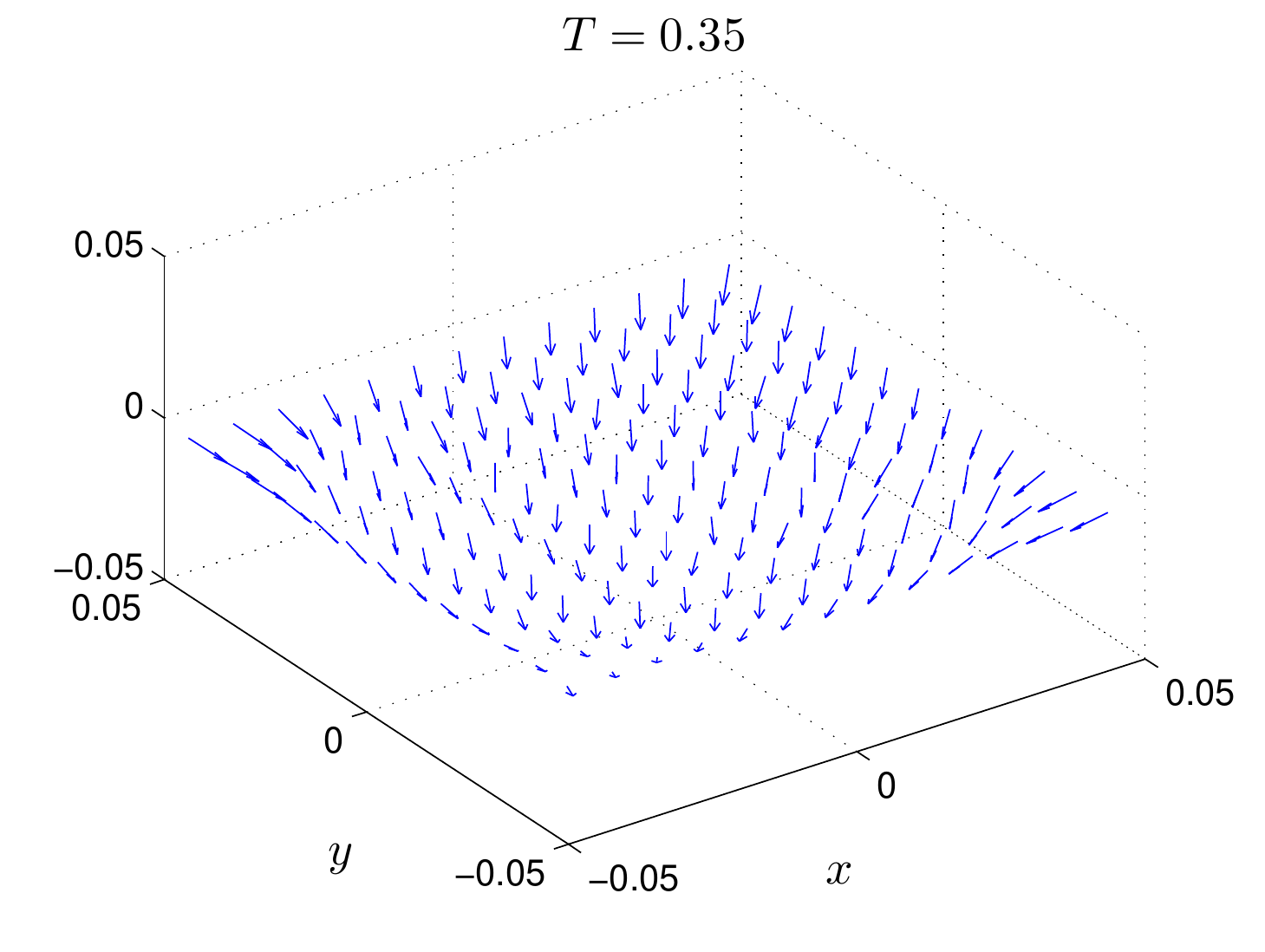}
  \end{tabular}
  \caption{The approximation by \eqref{num:1}--\eqref{num:2} for initial data \eqref{eq:sb1} in a neighborhood of the origin before and after blow-up time on a grid with $h=2^{-7}$.}
  \label{fig:details}
\end{figure}
In Figure \ref{fig:details} the approximation of \eqref{eq:3Dwave}, \eqref{eq:sb1} in a neighborhood of the origin near blow-up time is shown. We observe that the third component of $d$ first switches sign away from the origin and then closer to it, which seems to cause the blow-up in the gradient of $d$.

\end{document}